\documentclass[a4paper,12pt]{amsart}
\usepackage[english]{babel}
\usepackage{amsmath,amsthm,amssymb,amsfonts}
\usepackage{multicol}
 
\usepackage[pdftex]{color}
\usepackage[bookmarks=true,hyperindex,pdftex,colorlinks, citecolor=blue,linkcolor=blue, urlcolor=blue]{hyperref}
\usepackage[dvipsnames]{xcolor}
\usepackage{mathtools}
\usepackage{graphicx}
\usepackage[titletoc]{appendix}
\usepackage{tikz}


\usepackage[normalem]{ulem}

\usetikzlibrary{matrix}

\parskip=1ex
\textwidth=16cm
 \hoffset=-1.5cm

\newtheorem{theorem}{Theorem}[section]
\newtheorem{lemma}[theorem]{Lemma}

\newtheorem{proposition}[theorem]{Proposition}
\newtheorem{corollary}[theorem]{Corollary}

\theoremstyle{definition}
\newtheorem{definition}[theorem]{Definition}

\theoremstyle{remark}
\newtheorem{remark}[theorem]{Remark}
\numberwithin{equation}{section}

\newcommand{\R}{\mathbb{R}}

\newcommand{\N}{\mathbb{N}}
\newcommand{\K}{\mathbb{K}}

\DeclareMathOperator{\co}{co}
\DeclareMathOperator{\aco}{aco}

\DeclareMathOperator{\sign}{sign}

\newcommand{\nn}[1]{{\left\vert\kern-0.25ex\left\vert\kern-0.25ex\left\vert #1 
		\right\vert\kern-0.25ex\right\vert\kern-0.25ex\right\vert}}

\renewcommand{\geq}{\geqslant}
\renewcommand{\leq}{\leqslant}

\newcommand{\norm}[1]{\left\Vert#1\right\Vert}

\newcommand{\NA}{\operatorname{NA}}
\newcommand{\spann}{\operatorname{span}}

\newcommand{\ext}[1]{\operatorname{ext}\left(#1\right)}

\newcommand{\pten}{\ensuremath{\widehat{\otimes}_\pi}}
\newcommand{\iten}{\ensuremath{\widehat{\otimes}_\varepsilon}}
\newcommand{\eps}{\varepsilon}
\newcommand{\sten}{\ensuremath{\widehat{\otimes}_{\pi,s,N}}}
\newcommand{\siten}{\ensuremath{\widehat{\otimes}_{\eps,s,N}}}
\newcommand{\cconv}{\overline{\co}}

\begin{document}

\title{On Norm-Attainment in (Symmetric) Tensor Products}

\author[Dantas]{Sheldon Dantas}
\address[Dantas]{Departament de Matem\`{a}tiques and Institut Universitari de Matem\`{a}tiques i Aplicacions de Castell\'o (IMAC), Universitat Jaume I, Campus del Riu Sec. s/n, 12071 Castell\'o, Spain. \newline
	\href{http://orcid.org/0000-0001-8117-3760}{ORCID: \texttt{0000-0001-8117-3760} } }
\email{\texttt{dantas@uji.es}}

\author[Garc\'ia-Lirola]{Luis C. Garc\'ia-Lirola}
\address[Garc\'ia-Lirola]{Departamento de Matem\'aticas, Universidad de Zaragoza, 50009, Zaragoza, Spain \newline
	\href{https://orcid.org/0000-0002-1966-1330}{ORCID: \texttt{0000-0002-1966-1330} }}
\email{\texttt{luiscarlos@unizar.es}}

\author[Jung]{Mingu Jung}
\address[Jung]{Basic Science Research Institute and Department of Mathematics, POSTECH, Pohang 790-784, Republic of Korea \newline
	\href{http://orcid.org/0000-0000-0000-0000}{ORCID: \texttt{0000-0003-2240-2855} }}
\email{\texttt{jmingoo@postech.ac.kr}}

\author[Rueda Zoca]{Abraham Rueda Zoca}
\address[Rueda Zoca]{Universidad de Murcia, Departamento de Matem\'aticas, Campus de Espinardo 30100 Murcia, Spain
	\newline
	\href{https://orcid.org/0000-0003-0718-1353}{ORCID: \texttt{0000-0003-0718-1353} }}
\email{\texttt{abraham.rueda@um.es}}
\urladdr{\url{https://arzenglish.wordpress.com}}

\begin{abstract} In this paper, we introduce a concept of norm-attainment in the projective symmetric tensor product $\sten X$ of a Banach space $X$, which turns out to be naturally related to the classical norm-attainment of $N$-homogeneous polynomials on $X$. Due to this relation, we can prove that there exist symmetric tensors that do not attain their norms, which allows us to study the problem of when the set of norm-attaining elements in $\sten X$ is dense. We show that the set of all norm-attaining symmetric tensors is dense in $\sten X$ for a large set of Banach spaces as $L_p$-spaces, isometric $L_1$-predual spaces or Banach spaces with monotone Schauder basis, among others. Next, we prove that if $X^*$ satisfies the Radon-Nikod\'ym and the approximation property, then the set of all norm-attaining symmetric tensors in $\sten X^*$ is dense. From these techniques, we can present new examples of Banach spaces $X$ and $Y$ such that the set of all norm-attaining tensors in the projective tensor product $X \pten Y$ is dense, answering positively an open question from the paper \cite{DJRR}.
\end{abstract}

\thanks{ }

\subjclass[2020]{Primary 46B04; Secondary 46B20, 46B22, 46B28}
\keywords{Bishop-Phelps theorem; Symmetric tensor product; Projective tensor product; Radon-Nikod\'ym property; Norm attaining polynomials
}

\maketitle

\thispagestyle{plain}

\section{Introduction}

Recently, it was studied the set of the nuclear operators $T\colon X \longrightarrow Y$ between two Banach spaces $X, Y$ that attains their nuclear norm in the sense that 
\[ T=\sum_{n=1}^\infty x_n^*\otimes y_n, \text{ and } \norm{T}_{\mathcal N}=\sum_{n=1}^\infty \norm{x_n^*}\norm{y_n}\]
for some $(x_n^*)_{n=1}^\infty\subseteq X^*$ and $(y_n)_{n=1}^\infty\subseteq Y$ \cite{DJRR}. 
 From a practical point of view, it has been shown that this new concept has great connections with different norm-attainment concepts like the norm-attainment of bounded functionals, the norm-attainment of operators and the norm-attainment of bilinear forms coming from the identification  $(X\pten Y)^*=\mathcal L(X,Y^*)=\mathcal B(X\times Y)$ (see \cite[Proposition~3.10 and Corollary~3.11]{DJRR}). In this paper, we focus on a related norm-attaining notion on the $N$-fold symmetric tensor product of a Banach space $X$: we say that an element $z\in \sten X$ attains its norm if 
 \[ z =\sum_{n=1}^\infty \lambda_n x_n^N, \text{ and } \norm{z}=\sum_{n=1}^\infty |\lambda_n|\]
 for some $(\lambda_n)_{n=1}^\infty \subseteq \mathbb K$ and $(x_n)_{n=1}^\infty \subseteq X$. 
 Due to the fact that the dual $(\sten X)^*$ of the $N$-fold symmetric tensor product of a Banach space $X$ is identified with the space $\mathcal{P}(^N X)$ of all $N$-homogeneous polynomials on $X$, this norm-attainment notion turns out to be closely related to the theory of norm-attaining homogeneous polynomials for which the reader is referred to \cite{AAGM, CLM, CK96, KimLee}.

We present a characterization for the set of all norm-attaining elements in $\sten X$, denoted by $\NA_{\pi, s, N}(X)$, and we use it to prove that if every element in $\sten X$ attains its norm, then the set $\NA(^N X)$ of all $N$-homogeneous polynomials which attain their norms in dense in $\mathcal{P}(^N X)$. This, together with the fact that there are Banach spaces $X$ such that the set $\NA(^N X)$ is not dense in $\mathcal{P}(^N X)$ (see \cite{AAGM, JP}), allows us to get our first examples of spaces $X$ so that we can guarantee the existence of non-norm-attaining elements in $\sten X$. 
As there exist elements in $\sten X$ which do not attain their norms, it is natural to ask when the set of norm-attaining elements in $\sten X$ forms a dense subset. We prove that, under the metric $\pi$-property (see \cite{CASAZZA, JRZ}) on $X$, the denseness of $\NA_{\pi,s,N}(X)$ holds from the fact that every tensor in $\sten Z$ attains its norm whenever $Z$ is a finite dimensional space. This shows that, for a large class of Banach spaces, as for instance $L_p$-spaces, $L_1$-predual spaces, and Banach spaces with monotone Schauder basis, the set $\NA_{\pi,s,N}(X)$ is dense in $\sten X$. 
We also present a result not covered by the previous ones which holds under the Radon-Nikod\'ym property assumption. More precisely, we show that if $X^*$ has the Radon-Nikod\'ym property and the approximation property, then the set of tensors in $\sten X^*$ which attain their norms is dense. Moreover, we observe that the problem whether the set $\NA_{\pi, s, N}(X)$ is dense in $\sten X$ for every Banach space, is separably determined. 

We finish the paper by considering the set $\NA_{\pi}(X \pten Y)$ of all norm-attaining tensors in $X \pten Y$ and obtaining some positive results on the denseness of the set $\NA_{\pi} (X \pten Y)$. For instance, we prove that if $X$ is the convex hull of a finite set and $Y$ is a dual space, then {\it every} element in $X \pten Y$ attains its norm, which seems to be surprising somehow since there exists a Banach space $X$ so that $\NA_{\pi} (X \pten \ell_2^2) \not= X \pten \ell_2^2$ (see \cite[Example~3.12.(a)]{DJRR}). This result allows us to show that if $X$ is a polyhedral Banach space with the metric $\pi$-property, then the set $\NA_{\pi}(X \pten Y)$ is dense in $X \pten Y$ whenever $Y$ is a dual space. 
Moreover, in the same line as the symmetric tensor product case, we give a positive answer to an open question from \cite{DJRR} by proving that $\NA (X^* \pten Y^*)$ is dense in $X^* \pten Y^*$, provided that $X^*$ and $Y^*$ both have the Radon-Nikod\'ym property, and at least one of them has the approximation property.


\section{Notation and Preliminary Results}

In this section, we give the necessary notation and some preliminaries results we will be using throughout the paper.

The letters $X, Y$, and $Z$ stand for Banach spaces over the field $\mathbb{K}$ which will be $\mathbb{R}$ or $\mathbb{C}$. We denote by $B_X$ and $S_X$ the closed unit ball and unit sphere of $X$, respectively. Given a subset $B \subseteq X$, we denote the convex hull of $B$ by $\co(B)$. The symbol $\aco(B)$ stands for the absolutely convex (i.e., the convex and balanced) hull of the set $B$. If $A, B$ are subsets of $X, Y$, respectively, we denote by $A \otimes B$ the set $\{x \otimes y\in X\otimes Y: x \in A, y \in B\}$. Given a subset $C$ of $X$, a point $x \in C$ is said to be an extreme point of $C$ if $x$ cannot be written as a convex combination of points in $C$ which are different from $x$ itself. We denote by $\ext{C}$ the set of all extreme points of $C$.

For two Banach spaces $X$ and $Y$, the symbol $\mathcal{L}(X, Y)$ stands for the space of all bounded linear operators from $X$ into $Y$. By $\mathcal{B}(X \times Y)$ we mean the space of all bilinear forms on $X \times Y$ taking values in $\K$. The Banach space of all scalar-valued $N$-homogeneous polynomials on $X$ is denoted by $\mathcal{P}(^N X)$, which is endowed with the norm $\|P\| = \sup_{x \in B_X} |P(x)|$ for every $P \in \mathcal{P}(^N X)$. In this case, $P$ is said to attain its norm when this supremum becomes a maximum. We denote by $\NA(^N X)$ the set of all $N$-homogeneous polynomials which attain their norms on $X$. For background on homogeneous polynomials we refer the refer to \cite{Din, HJ, Muj}.

The projective and injective tensor product between $X$ and $Y$, denoted by $X \pten Y$ and $X\iten Y$, are the completion of the algebraic tensor product $X \otimes Y$ endowed with the norms 
\begin{equation} \label{projective-norm}
\|z\|_{\pi} := \inf \left\{ \sum_{i=1}^n \|x_i\| \|y_i\|: z = \sum_{i=1}^n x_i \otimes y_i \right\},
\end{equation}
where the infimum is taken over all such representations of $z$, 
and 
\begin{equation*}
	\norm{\sum_{i=1}^n x_i\otimes y_i}_\varepsilon = \sup \left\{ \left| \sum_{i=1}^n x^*(x_i)y^*(y_i) \right|: x^*\in B_{X^*}, y^*\in B_{Y^*}\right\}.
\end{equation*}

 It is well known that $B_{X \pten Y} = \overline{\co}{(B_X \otimes B_Y)}$ and that $(X \pten Y)^* = \mathcal{B}(X \times Y) = \mathcal{L}(X, Y^*)$. There is a canonical operator $J\colon X^* \pten Y \longrightarrow \mathcal{L}(X, Y)$ with $\|J\| = 1$ defined by $z = \sum_{n=1}^{\infty} x_n^* \otimes y_n \mapsto L_z$, where $L_z: X \longrightarrow Y$ is given by $L_z(x) = \sum_{n=1}^{\infty} x_n^*(x) y_n$. The operators that arise in this way are called nuclear operators and we denote them by $\mathcal{N}(X, Y)$ endowed with the nuclear norm
\begin{equation} \label{nuclear-norm}
\|T\|_{\mathcal{N}} := \inf \left\{ \sum_{n=1}^{\infty} \|x_n^*\| \|y_n\|: T(x) = \sum_{n=1}^{\infty} x_n^*(x) y_n \right\}
\end{equation}
where the infimum is taken over all representations of $T$ of that form. Let us notice that every nuclear operator is the limit (in the operator norm) of a sequence of finite-rank operators, so every nuclear operator is compact.

Recall that a Banach space $X$ satisfies the approximation property (AP, for short) if for every compact subset $K$ of $X$ and for every $\eps > 0$, there exists a finite-rank operator $T\colon X \longrightarrow X$ such that $\|T(x) - x\| \leq \eps$ for every $x \in K$. It turns out that whenever $X^*$ or $Y$ has the approximation property, then $X^* \pten Y = \mathcal{N}(X, Y)$. For a detailed account on tensor products and nuclear operators, we refer the reader to \cite{DF, rya}.

Given a natural number $N \in \N$, we denote by $z^N$ the element $z \otimes \stackrel{N}{\ldots} \otimes z\in X\otimes \stackrel{N}{\ldots} \otimes X$ for every $z \in X$. The ($N$-fold) projective symmetric tensor product of $X$, denoted by $\sten X$, is the completion of the linear space $\otimes_{\pi, s, N} X$, generated by $\{z^N : z \in X\}$, under the norm given by
\begin{equation} \label{stn}
\|z\|_{\pi,s,N} := \inf \left\{ \sum_{k=1}^n |\lambda_k| : z:= \sum_{k=1}^n \lambda_k x_k^N, n \in \N, x_k \in S_X, \lambda_k \in \K \ \right\},
\end{equation}
where the infimum is taken over all the possible representations of $z$. Its topological dual $\left( \sten X \right)^*$ can be identified (there exists an isometric isomorphism) with $\mathcal{P}(^N X)$. Indeed, every polynomial $P \in \mathcal{P}(^N X)$ acts as a linear functional on $\sten X$ through its associated symmetric $N$-linear form $\overline{P}$ and satisfies 
\begin{equation*} 
P(x) = \overline{P}(x, \ldots, x) = \langle P, x^N \rangle  
\end{equation*} 
for every $x \in X$. We also have that $B_{\sten X} = \overline{\aco}(\{x^N: x \in S_X\})$. To save notation, by a symmetric tensor we will refer to a generic element of $\sten X$. For more information about symmetric tensor products, we send the reader to \cite{F1} and also to recent papers as \cite{BR, CD, CG}.


Throughout the paper, we will be interested in studying the concepts of norm-attainment on $X \pten Y$, $\mathcal{N}(X, Y)$, and $\sten X$ meaning that their norms (\ref{projective-norm}), (\ref{nuclear-norm}), and (\ref{stn}) are respectively attained. More precisely, we have the following definitions, which will be our main notions in this paper:
\begin{itemize}
	
	\item[(1)] $z \in X \pten Y$ {\it attains its projective norm} if there is a bounded sequence $(x_n, y_n) \subseteq X \times Y$ with $\sum_{n=1}^{\infty} \|x_n\| \|y_n\| < \infty$ such that $z=\sum_{n=1}^\infty x_n\otimes y_n$ and that $\|z\|_{\pi} = \sum_{n=1}^{\infty} \|x_n\| \|y_n\|$. In this case, we say that $z$ is a {\it norm-attaining tensor}.
	\vspace{0.3cm} 
	
	\item[(2)] $T \in \mathcal{N}(X, Y)$ {\it attains its nuclear norm} if there is a bounded sequence $(x_n^*, y_n) \subseteq X^* \times Y$ with $\sum_{n=1}^{\infty} \|x_n^*\|\|y_n\| < \infty$ such that $T=\sum_{n=1}^\infty x_n^*\otimes y_n$ and that $\|T\|_{\mathcal{N}} = \sum_{n=1}^{\infty} \|x_n^*\| \|y_n\|$. In this case, we say that $T$ is a {\it norm-attaining nuclear operator}.
	
	\vspace{0.3cm} 
	
	\item[(3)] $z \in \sten X$ {\it attains its projective symmetric norm} if there are bounded sequences $(\lambda_n)_{n=1}^{\infty} \subset \K$ and $(x_n)_{n=1}^{\infty} \subseteq B_X$ such that $\|z\|_{\pi, s, N} = \sum_{n=1}^{\infty} |\lambda_n|$ for $z = \sum_{n=1}^{\infty} \lambda_n x_n^N$. In this case, we say that $z$ is a {\it norm-attaining symmetric tensor}.
\end{itemize}

\noindent
When there is no confusion of misunderstanding and it is clear on what spaces we are working with, we denote the norms $\| \cdot \|_{\pi}$, $\| \cdot \|_{\mathcal{N}}$, and $\|\cdot\|_{\pi,s,N}$ simply by $\|\cdot\|$. Therefore, we set
\begin{itemize}
\item[(i)] $\NA_{\pi} (X \pten Y) = \Big\{ z \in X \pten Y: z \ \mbox{attains its projective norm} \Big\}$,
\vspace{0.1cm}
\item[(ii)] $\NA_{\mathcal{N}} (X,Y) = \Big\{ T \in \mathcal{N}(X, Y): T \ \mbox{attains its nuclear norm} \Big\}$,
\vspace{0.1cm}
\item[(iii)] $\NA_{\pi,s,N}(X) = \Big\{ z \in \sten X: z \ \mbox{attains its symmetric norm} \Big\}$.	
\end{itemize}



Recall that a subspace $Y$ of a Banach space $X$ is said to be an ideal of $X$ if for every finite-dimensional subspace $E$ of $X$ and every $\eps > 0$, there is a linear operator $T \in \mathcal{L} (E , Y)$ such that $T(e) = e$ for every $e \in E \cap Y$ and $\|T\| \leq 1 + \eps$. 
Let us notice that $1$-complemented subspaces are ideals and that the concept of being an ideal of $X$ coincides with the one of locally complemented subspace of $X$ (see \cite{Kalton}). The following result is motivated by \cite[Theorem~1.(a)]{Rao}, where the author proves that if $X$ and $Z$ are Banach spaces and $Y$ is an ideal of $Z$, then $X \pten Y$ is a subspace of $X \pten Z$ and it is an ideal. In what follows, $\sten Y$ being an isometric subspace means that if we consider the natural embedding of it into $\sten X$, then the norms in $\sten Y$ and $\sten X$ coincide on $\sten Y$.

\begin{theorem} \label{ideal} Let $X$ be a Banach space and $Y$ an ideal of $X$. Then, $\sten Y$ is an isometric subspace of $\sten X$. 
\end{theorem}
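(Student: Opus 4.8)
The plan is to show that the natural inclusion $\otimes_{\pi,s,N} Y \hookrightarrow \otimes_{\pi,s,N} X$ preserves the projective symmetric norm on the (dense) algebraic symmetric tensor product, and then to pass to the completions to obtain the isometric embedding $\sten Y \hookrightarrow \sten X$. Throughout I write $\norm{\cdot}^Y$ and $\norm{\cdot}^X$ for the projective symmetric norms of $\sten Y$ and $\sten X$. The inequality $\norm{z}^X \leq \norm{z}^Y$ is immediate: since $S_Y \subseteq S_X$, every representation $z = \sum_k \lambda_k y_k^N$ with $y_k \in S_Y$ is also an admissible representation in $\otimes_{\pi,s,N} X$, so the infimum defining $\norm{z}^X$ runs over a larger family.

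For the reverse inequality, fix $z \in \otimes_{\pi,s,N} Y$ and $\eps > 0$, and choose an $X$-representation $z = \sum_{k=1}^n \lambda_k x_k^N$ with $x_k \in S_X$ and $\sum_{k=1}^n |\lambda_k| \leq \norm{z}^X + \eps$. Since $z$ belongs to $\otimes_{\pi,s,N} Y$, I also fix some representation $z = \sum_{j=1}^m \mu_j w_j^N$ with $w_j \in Y$. Let $E := \spann\{x_1, \ldots, x_n, w_1, \ldots, w_m\}$, a finite-dimensional subspace of $X$. Applying the ideal hypothesis to $E$ and $\eps$, I obtain $T \in \mathcal{L}(E, Y)$ with $\norm{T} \leq 1 + \eps$ and $T(e) = e$ for all $e \in E \cap Y$; in particular $T(w_j) = w_j$ for each $j$.

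The key step is to promote $T$ to symmetric tensors. The linear map $T^{\otimes N} \colon E^{\otimes N} \to Y^{\otimes N}$ carries symmetric tensors to symmetric tensors, and its restriction induces a well-defined linear map $\widehat{T} \colon \otimes_{\pi,s,N} E \to \otimes_{\pi,s,N} Y$ determined by $\widehat{T}(u^N) = (Tu)^N$. Since $z \in \otimes_{\pi,s,N} E$, evaluating $\widehat{T}$ on the two representations above and using $T(w_j) = w_j$ gives, on the one hand, $\widehat{T}(z) = \sum_{j} \mu_j (Tw_j)^N = \sum_{j} \mu_j w_j^N = z$, and on the other hand $\widehat{T}(z) = \sum_{k} \lambda_k (Tx_k)^N$. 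Hence $z = \sum_k \lambda_k (Tx_k)^N$ is a representation of $z$ with vectors in $Y$. Discarding the null terms and writing $Tx_k = \norm{Tx_k}\, u_k$ with $u_k \in S_Y$, we obtain the $Y$-representation $z = \sum_k \lambda_k \norm{Tx_k}^N u_k^N$, whence
\[
\norm{z}^Y \leq \sum_{k=1}^n |\lambda_k|\, \norm{Tx_k}^N \leq (1+\eps)^N \sum_{k=1}^n |\lambda_k| \leq (1+\eps)^N \left( \norm{z}^X + \eps \right).
\]
Letting $\eps \to 0$ yields $\norm{z}^Y \leq \norm{z}^X$, so the two norms agree on $\otimes_{\pi,s,N} Y$ and the isometric embedding follows by density.

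I expect the main obstacle to be the functoriality step: verifying that $\widehat{T}$ is a genuinely well-defined linear map on the algebraic symmetric tensor product (so that its two evaluations of the single element $z$ necessarily coincide), and that the algebraic inclusions $\otimes_{\pi,s,N} Y \hookrightarrow \otimes_{\pi,s,N} E \hookrightarrow \otimes_{\pi,s,N} X$ are injective, so that the identity $\widehat{T}(z) = z$ really takes place inside $\otimes_{\pi,s,N} Y$. Both points are standard consequences of the universal property of (symmetric) tensor powers, but they should be spelled out; the remaining norm bookkeeping is routine.
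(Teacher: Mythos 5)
Your proof is correct and follows essentially the same route as the paper: both reduce to a finite-dimensional subspace of $X$ carrying a near-optimal representation, use the ideal property to produce an operator into $Y$ of norm close to $1$ that fixes the relevant vectors, and transport the tensor through the induced map on symmetric tensor powers to get a $Y$-representation of almost the same weight. The only cosmetic differences are that you build the finite-dimensional subspace $E$ explicitly from a near-optimal representation where the paper invokes the fact that the symmetric projective norm is finitely generated, and you read the final estimate directly off the transported representation $z=\sum_k\lambda_k (Tx_k)^N$ instead of using the operator-norm identity $\|T^N\|=\|T\|^N$.
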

\begin{proof} Notice first that, by a denseness argument, it is enough to prove the theorem for $z = \sum_{i=1}^n \lambda_i y_i \in \otimes_{\pi, s, N} Y \subseteq \otimes_{\pi, s, N} X$. By the definition of the norm (see (\ref{stn})), we have that $\|z\|_{\sten X} \leq \|z\|_{\sten Y}$. Now, let us prove the other inequality.

Let $\eps > 0$ be given. Since the norm on a symmetric tensor product is finitely generated (see \cite[Subsection 2.2]{F1}), there exists a finite-dimensional subspace $F$ of $X$ containing $\{y_1, \ldots, y_n\}$ such that $\|z\|_{\sten F} < \|z\|_{\sten X} + \eps$. Since $Y$ is an ideal in $X$, there exists a linear operator $T \in \mathcal{L} ( F , Y)$ such that $\|T\| \leq \sqrt[N]{1 + \eps}$ and $T(y_i) = y_i$ for every $i=1,\ldots, n$. Let us define $T^N \in \mathcal{L} (\sten F , \sten Y)$ by $T^N(m^N) := T(m)^N$ for every $m \in F$. This operator is well-defined and satisfies $\|T^N\| = \|T\|^N \leq 1 + \eps$ (see \cite[Subsection 2.2]{F1}). Therefore, we have that 
\begin{equation*}
\sum_{i=1}^n \lambda_i y_i^N = \sum_{i=1}^n \lambda_i T(y_i)^N = \sum_{i=1}^n T^N (y_i^N) = T^N \left( \sum_{i=1}^n \lambda_i y_i^N \right)
\end{equation*} 
and then 
	\begin{eqnarray*}
		\left\| \sum_{i=1}^n \lambda_i y_i^N \right\|_{\sten Y} = \left\| T^N \left( \sum_{i=1}^n \lambda_i y_i^N \right) \right\|_{\sten Y} &\leq& \|T^N\| \left\| \sum_{i=1}^n \lambda_i y_i^N \right\|_{\sten F} \\
		&\leq& (1 + \eps)\|z\|_{\sten F} \\
		&<& (1 + \eps) (\|z\|_{\sten X} + \eps).	
	\end{eqnarray*}	
	Since $\eps > 0$ is arbitrary, $\|z\|_{\sten Y} \leq \|z\|_{\sten X}$ and we are done.	
\end{proof}

We will be using also the following straightforward fact.  

\begin{lemma} \label{estimation-symmetric} Let $X$ be a Banach space. Let $\eps > 0$ and $x, y \in S_X$ Then, 
	\begin{equation*}
	\|x^N - y^N\|_{\sten X} \leq \frac{N^{N+1}}{N!} \|x-y\| .
	\end{equation*}
\end{lemma}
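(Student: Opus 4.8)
The plan is to combine a telescoping decomposition of $x^N-y^N$ with the classical polarization inequality for homogeneous polynomials. For $0\leq j\leq N$ write $s_j$ for the symmetric tensor obtained by symmetrizing $x^{\otimes j}\otimes y^{\otimes(N-j)}$, so that $s_N=x^N$ and $s_0=y^N$. Then
\[
x^N-y^N=\sum_{j=1}^N (s_j-s_{j-1}),
\]
and since the symmetrization map is linear and insensitive to the order of the slots, each difference is
\[
s_j-s_{j-1}=\operatorname{Sym}\bigl(x^{\otimes(j-1)}\otimes(x-y)\otimes y^{\otimes(N-j)}\bigr),
\]
i.e.\ the symmetric product of $j-1$ copies of $x$, one copy of $x-y$, and $N-j$ copies of $y$. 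The whole reason for telescoping this way is that in every summand the vector $x-y$ appears as exactly one of the $N$ symmetric factors, while the remaining $N-1$ factors are the unit vectors $x$ and $y$.

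The key estimate I would isolate is a bound on the symmetric projective norm of a general symmetric product. Writing $a_1\odot\cdots\odot a_N$ for the symmetrization of $a_1\otimes\cdots\otimes a_N$, I would use the duality $(\sten X)^*=\mathcal{P}(^N X)$ together with the pairing $\langle P,a_1\odot\cdots\odot a_N\rangle=\overline{P}(a_1,\dots,a_N)$ (which is consistent with $\langle P,x^N\rangle=P(x)$ when all factors coincide) to get
\[
\|a_1\odot\cdots\odot a_N\|_{\sten X}=\sup_{\|P\|\leq 1}\bigl|\overline{P}(a_1,\dots,a_N)\bigr|\leq\Bigl(\sup_{\|P\|\leq 1}\|\overline{P}\|\Bigr)\prod_{i=1}^N\|a_i\|.
\]
By the polarization inequality $\|\overline{P}\|\leq \frac{N^N}{N!}\|P\|$ (see e.g.\ \cite{Din,Muj}), this yields
\[
\|a_1\odot\cdots\odot a_N\|_{\sten X}\leq \frac{N^N}{N!}\prod_{i=1}^N\|a_i\|.
\]
Applying this to each telescoping term, whose factors are one copy of $x-y$ and $N-1$ unit vectors, gives $\|s_j-s_{j-1}\|_{\sten X}\leq \frac{N^N}{N!}\|x-y\|$ for every $j$. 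Summing the $N$ terms then produces exactly $\frac{N^{N+1}}{N!}\|x-y\|$.

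The main obstacle is not the telescoping itself but the extraction of the factor $\|x-y\|$. Bounding the polarization formula crudely by the triangle inequality only gives a constant of the shape $\frac{1}{N!}\bigl(N-1+\|x-y\|\bigr)^N$, which stays bounded away from $0$ as $x\to y$ and is therefore useless. It is essential to invoke the \emph{sharp} polarization inequality, whose virtue is that it converts the estimate into the product $\prod_i\|a_i\|$ of the individual factor norms; because exactly one factor equals $x-y$ in each summand, this product collapses to $\|x-y\|$, and the extra factor $N$ in the final constant is accounted for by the $N$ telescoping terms. A secondary point requiring care is the normalization of the pairing $\langle P,a_1\odot\cdots\odot a_N\rangle=\overline{P}(a_1,\dots,a_N)$, which must be checked against the defining identity $\langle P,x^N\rangle=P(x)$.

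As an equivalent route one may replace the discrete telescoping by the integral $x^N-y^N=\int_0^1 \frac{d}{dt}\bigl(z_t^N\bigr)\,dt$ along the segment $z_t=(1-t)y+tx$, where $\frac{d}{dt}z_t^N=N\operatorname{Sym}\bigl(z_t^{\otimes(N-1)}\otimes(x-y)\bigr)$ and $\|z_t\|\leq 1$; the same polarization estimate then bounds the integrand by $\frac{N^N}{N!}\|x-y\|$, and integrating reproduces the constant $\frac{N^{N+1}}{N!}$. I would present the telescoping version as the primary argument, since it avoids any appeal to vector-valued calculus.
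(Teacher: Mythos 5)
Your proof is correct and is essentially the paper's argument: the same telescoping of $x^N-y^N$ into $N$ differences in which $x-y$ occupies exactly one tensor slot, combined with the polarization constant $\frac{N^N}{N!}$. The only difference is organizational — the paper first passes to the full projective norm via $\|\cdot\|_{\sten X}\leq \frac{N^N}{N!}\|\cdot\|_{X\pten\cdots\pten X}$ and telescopes in $X\pten\cdots\pten X$, whereas you telescope the symmetrized terms inside $\sten X$ and feed the polarization inequality in through the duality with $\mathcal{P}(^N X)$; both routes rest on the same two ingredients and produce the same constant.
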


\begin{proof} By the polarization constant (see \cite[Subsection 2.3]{F1}), we have that
	\begin{equation*}
	\|x^N - y^N\|_{\sten X} \leq \frac{N^N}{N!} \|x^N - y^N\|_{X \pten X \dots \pten X}.
	\end{equation*}
	Now, let us notice that
	\begin{equation*}
	x^N - y^N = \sum_{k=1}^N x^{N-k} \otimes (x - y) \otimes y^{k-1}.
	\end{equation*}	
	This proves the statement since
	\begin{equation*}
	\|x^N - y^N\|_{X \pten \dots \pten X} \leq \sum_{k=1}^N \|x\|^{N-k} \|x-y\| \|y\|^{k-1} = N \|x - y\|.
	\end{equation*}	
\end{proof}

\section{Results for Symmetric Tensor Products} 

In this section we show that there are symmetric tensors that {\it do not} attain their norms and study the denseness problem for norm-attaining elements in $\sten X$. We start by giving a relation between the concepts of norm-attainment for symmetric tensors and $N$-homogeneous polynomials.

\begin{theorem} \label{everyelementnormattaining} Let $X$ be a Banach space and suppose that every element in $\sten X$ attains its norm. Then, the set of all $N$-homogeneous polynomials that attain their norms is dense in the space of all $N$-homogeneous polynomials. In other words,
	\begin{equation*} 
	\overline{\NA(^N X)} = \mathcal{P}(^N X).
	\end{equation*}

\end{theorem}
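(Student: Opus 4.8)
The plan is to combine the Bishop--Phelps theorem with the isometric identification $\left(\sten X\right)^* = \mathcal{P}(^N X)$ recalled in the preliminaries. Under this identification a polynomial $P$ acts on a generator via $\langle P, x^N\rangle = P(x)$, and since $B_{\sten X} = \overline{\aco}(\{x^N : x \in S_X\})$ one has $\|P\| = \sup_{x \in S_X}|P(x)|$. Consequently $P$ attains its polynomial norm (its supremum over $B_X$ is a maximum) exactly when, regarded as a functional on the Banach space $E := \sten X$, it attains its norm at a point of the form $x^N$ with $x \in S_X$. The conceptual crux I want to exploit is that a functional which \emph{norms} a norm-attaining symmetric tensor is automatically a norm-attaining polynomial; this will turn an abstract density statement about norming functionals into the statement we want.

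First I would invoke the Bishop--Phelps theorem for the Banach space $E = \sten X$: the set of functionals in $E^* = \mathcal{P}(^N X)$ that attain their norm on $B_E$ is norm-dense in $\mathcal{P}(^N X)$ (this holds over both $\R$ and $\C$). It then suffices to prove that, under the hypothesis, every such norm-attaining functional already lies in $\NA(^N X)$, for then $\NA(^N X)$ contains a dense set and the conclusion $\overline{\NA(^N X)} = \mathcal{P}(^N X)$ follows.

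So suppose $P \in \mathcal{P}(^N X)$ attains its norm on $B_{\sten X}$, say at some $z_0 \in S_{\sten X}$; after multiplying $z_0$ by a unimodular scalar (which preserves both membership in $S_{\sten X}$ and the property of attaining its tensor norm) I may assume $\langle P, z_0\rangle = \|P\|$. Now the hypothesis enters: $z_0$ attains its symmetric norm, so there is a representation $z_0 = \sum_n \lambda_n x_n^N$ with $x_n \in B_X$ and $\sum_n |\lambda_n| = \|z_0\| = 1$. Expanding the pairing gives $\|P\| = \langle P, z_0\rangle = \sum_n \lambda_n P(x_n)$, so the chain $\|P\| = \bigl|\sum_n \lambda_n P(x_n)\bigr| \le \sum_n |\lambda_n|\,|P(x_n)| \le \sum_n |\lambda_n|\,\|x_n\|^N\|P\| \le \sum_n |\lambda_n|\,\|P\| = \|P\|$ is an equality throughout. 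For every index $n$ with $\lambda_n \neq 0$ this forces simultaneously $\|x_n\| = 1$ and $|P(x_n)| = \|P\|$; since $\sum_n|\lambda_n| = 1$, at least one such index exists, and the corresponding $x_n \in S_X$ witnesses that $P$ attains its polynomial norm. Hence $P \in \NA(^N X)$, as required.

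The routine verifications are the isometric duality together with the formula $\|P\| = \sup_{x \in S_X}|P(x)|$, and the equality analysis in the triangle inequality above. I expect the only genuinely delicate points to be the reduction to $\langle P, z_0\rangle = \|P\|$ in the complex case and the justification that the witnessing vector lies on $S_X$ rather than merely in $B_X$ (the latter is forced by the equality $|P(x_n)| = \|P\|$ against $|P(x)| \le \|x\|^N\|P\|$). Beyond confirming that the Bishop--Phelps conclusion holds over $\C$, I anticipate no serious obstacle, since the whole argument reduces the theorem to the single observation that norming functionals of norm-attaining tensors are norm-attaining polynomials.
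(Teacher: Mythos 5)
Your proof is correct and follows essentially the same route as the paper: apply the Bishop--Phelps theorem to the Banach space $\sten X$, use the hypothesis to write the norming tensor $z_0$ as $\sum_n \lambda_n x_n^N$ with $\sum_n|\lambda_n|=\|z_0\|$, and extract from the resulting equality in the triangle inequality a point $x_n\in S_X$ at which the approximating polynomial attains its norm. The only cosmetic difference is that you carry out the equality analysis inline (also handling $x_n\in B_X$ rather than $S_X$), whereas the paper packages it as a separate characterization lemma (Lemma~\ref{characterization}).
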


In order to prove Theorem~\ref{everyelementnormattaining}, we present a characterization for elements of $\sten X$ to attain their norms. We have the following result, which is the counterpart of \cite[Theorem~3.1]{DJRR} for symmetric tensors and homogeneous polynomials. We denote by $\sign(\lambda)$ the complex number $\frac{\overline{\lambda}}{\vert \lambda\vert}$ for each $\lambda \in \mathbb{C} \setminus \{0\}$. 

\begin{lemma} \label{characterization} Let $X$ be a Banach space and let 
\begin{equation*} 	
z = \sum_{n=1}^{\infty} \lambda_n x_n^N \in \sten X
\end{equation*} 
where $\lambda_n \in \mathbb{C} \setminus \{0\}$ and $(x_n)_{n=1}^{\infty} \subseteq S_X$. Then, the following statements are equivalent.
	\begin{itemize}
		\item[(1)] $\|z\| = \sum_{n=1}^{\infty} |\lambda_n|$; in other words, $z \in \NA_{\pi, s, N} (X)$.
		\item[(2)] There exists $P \in S_{\mathcal{P}(^NX)}$ such that $P(x_n) = \sign(\lambda_n), \forall n \in \N$.
		\item[(3)] Every $P \in S_{\mathcal{P}(^NX)}$ such that $P(z) = \|z\|$ satisfies $P(x_n) = \sign(\lambda_n), \forall n \in \N$.
	\end{itemize}
\end{lemma}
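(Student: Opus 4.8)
The plan is to exploit the isometric duality $(\sten X)^* = \mathcal{P}(^N X)$ together with the elementary identity $\lambda_n \sign(\lambda_n) = |\lambda_n|$, and to establish the three statements by the cycle $(1) \Rightarrow (3) \Rightarrow (2) \Rightarrow (1)$. Throughout I will use that $\|x_n^N\| = 1$ for $x_n \in S_X$, so that $|P(x_n)| = |\langle P, x_n^N \rangle| \leq \|P\| = 1$ for every $P \in S_{\mathcal{P}(^N X)}$, and that the very definition of the projective symmetric norm furnishes the a priori bound $\|z\| \leq \sum_{n=1}^{\infty} |\lambda_n|$. Thus statement $(1)$ asserts that this inequality is in fact an equality.

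For $(1) \Rightarrow (3)$, I would fix $P \in S_{\mathcal{P}(^N X)}$ with $P(z) = \|z\|$ and use the continuity of $P$ to write $\|z\| = P(z) = \sum_{n=1}^{\infty} \lambda_n P(x_n)$, the series converging because the partial sums of $z$ converge in $\sten X$. By $(1)$ this common value equals $\sum_{n=1}^{\infty} |\lambda_n|$, a finite real number. Since $\re(\lambda_n P(x_n)) \leq |\lambda_n P(x_n)| \leq |\lambda_n|$ for every $n$ while the two series have equal sums, equality must hold in each summand; the relations $\re(\lambda_n P(x_n)) = |\lambda_n|$ and $|\lambda_n P(x_n)| \leq |\lambda_n|$ together force $\lambda_n P(x_n) = |\lambda_n|$, that is, $P(x_n) = |\lambda_n|/\lambda_n = \sign(\lambda_n)$.

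The implication $(3) \Rightarrow (2)$ is then immediate once one observes that a norming polynomial exists at all: by the Hahn--Banach theorem, equivalently by the weak-$*$ compactness of $B_{\mathcal{P}(^N X)}$, there is $P \in S_{\mathcal{P}(^N X)}$ with $P(z) = \|z\|$, and $(3)$ says precisely that this $P$ witnesses $(2)$. Finally, for $(2) \Rightarrow (1)$, given $P \in S_{\mathcal{P}(^N X)}$ with $P(x_n) = \sign(\lambda_n)$ for all $n$, I compute $P(z) = \sum_{n=1}^{\infty} \lambda_n P(x_n) = \sum_{n=1}^{\infty} \lambda_n \sign(\lambda_n) = \sum_{n=1}^{\infty} |\lambda_n|$, and since $|P(z)| \leq \|z\|$ this yields $\sum_{n=1}^{\infty} |\lambda_n| \leq \|z\|$, which combined with the reverse a priori bound gives $(1)$.

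The routine points here are the continuity and convergence bookkeeping and the existence of the norming functional. The one genuinely delicate step is the term-by-term analysis in $(1) \Rightarrow (3)$ over $\mathbb{C}$: I must argue that equality of the two convergent series, together with the pointwise inequalities $\re(\lambda_n P(x_n)) \leq |\lambda_n|$, forces equality in each summand and thereby pins down each value $P(x_n)$ exactly. This is the analogue, for symmetric tensors and homogeneous polynomials, of the equality-in-the-triangle-inequality argument used in \cite[Theorem~3.1]{DJRR}, and it is where the precise form of $\sign(\lambda_n)$ enters.
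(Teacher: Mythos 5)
Your proposal is correct and follows essentially the same route as the paper: the cycle $(1)\Rightarrow(3)\Rightarrow(2)\Rightarrow(1)$, using the duality $(\sten X)^*=\mathcal{P}(^NX)$, a norming polynomial from Hahn--Banach, and the term-by-term equality argument in the series $\sum_n \re(\lambda_n P(x_n))\leq\sum_n|\lambda_n|$. Your explicit handling of the complex case (combining $\re(\lambda_nP(x_n))=|\lambda_n|$ with $|\lambda_nP(x_n)|\leq|\lambda_n|$ to pin down $P(x_n)$) is in fact slightly more careful than the paper's写.
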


\begin{proof} Let us suppose that (1) holds. Pick any $P \in \left( \sten X \right)^* = \mathcal{P}(^N X)$ with $\|P\| = 1$ and $P(z) = \|z\|$. We have that
	\begin{equation*}
	\sum_{n=1}^{\infty} |\lambda_n| = \|z\| = P(z) = \sum_{n=1}^{\infty} \lambda_n \text{Re} P(x_n) \leq \sum_{n=1}^{\infty} |\lambda_n|,
	\end{equation*}
	which implies that $P(x_n) = \sign(\lambda_n)$ for every $n \in \N$. This shows that (3) holds. The implication (3) $\Rightarrow (2)$ is immediate. Assume now that (2) holds. Then, there exists $P \in \mathcal{P}(^N X)$ with $\|P\| = 1$ such that $P(x_n) = \sign(\lambda_n)$ for every $n \in \N$. So,
	\begin{equation*}
	\sum_{n=1}^{\infty} |\lambda_n| \geq \|z\| \geq P(z) = \sum_{n=1}^{\infty} \lambda_n P(x_n) = \sum_{n=1}^{\infty} |\lambda_n|
	\end{equation*} 
	and this implies $\|z\| = \sum_{n=1}^{\infty} \lambda_n$. Therefore, (2) implies (1).
\end{proof}

By using Lemma~\ref{characterization} above, we can now prove Theorem~\ref{everyelementnormattaining}.

\begin{proof}[Proof of Theorem~\ref{everyelementnormattaining}] Let $\eps > 0$ and $P \in \mathcal{P}(^N X) = (\sten X)^*$ with $\|P\| = 1$ be given. By the Bishop-Phelps theorem for the Banach space $\sten X$, there are $P_0 \in \mathcal{P}(^N X)$ with $\|P_0\| = 1$ and $z_0 \in S_{\sten X}$ such that 
\begin{equation*} 	
P(z_0) = 1 \ \ \ \ \mbox{and} \ \ \ \ \|P_0 - P\| < \eps.
\end{equation*} 
By hypothesis we have that $z_0 \in \NA_{\pi, s, N} (X)$. So, there are $(\lambda_n)_{n=1}^{\infty} \subseteq \mathbb{R} \setminus \{0\}$ and $(x_n)_{n=1}^{\infty} \subseteq S_X$ such that $\|z_0\| = \sum_{n=1}^{\infty} \lambda_n $ for $z_0 = \sum_{n=1}^{\infty} \lambda_n x_n^N$. By Lemma~\ref{characterization}, $P_0(x_n) = \sign(\lambda_n)$ for every $n \in \N$. In particular, $P_0 \in \NA(^N X)$ and we are done.
\end{proof}

Now we are able to present some examples where there exist symmetric tensors $z$ which do not attain their norms.

\begin{remark} \label{there-are-non-norm-attaining} It is known (see \cite{AAGM, JP}) that if $X = d_{*}(w, 1)$ with $w \in \ell_2 \setminus \ell_1$, the predual of Lorentz sequence space, then the set $\mathcal{P}(^N X)$, for $N \geq 2$, of all norm-attaining $N$-homogeneous polynomials on $X$, is {\it not} dense in $\mathcal{P}(^N X)$. Thus, Theorem~\ref{everyelementnormattaining} implies that there exists an element $z$ in $\sten X$ which does not attain its norm. 
\end{remark} 	
	
In contrast to Remark~\ref{there-are-non-norm-attaining}, when $X$ is finite-dimensional, we do have that {\it every} symmetric tensor is norm-attaining (we send the reader also to Theorem~\ref{theorem-polyhedral-dual} for an analogous phenomenon on projective tensor products). Its proof can be obtained by arguing as in \cite[Proposition~3.5]{DJRR} with the aid of the fact that a convex hull of a compact set in a finite dimensional space is again compact and that $B_{\sten X} = \overline{\aco}(\{x^N: x \in S_X\})$.

\begin{proposition} \label{finite-dimensional} Let $X$ be a finite dimensional Banach space. Then, every symmetric tensor attains its projective symmetric tensor norm. In other words, 
\begin{equation*} 	
\NA_{\pi, s, N}( X) = \sten X.
\end{equation*}
\end{proposition}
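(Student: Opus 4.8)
The plan is to show that for a finite dimensional space $X$, every symmetric tensor $z \in \sten X$ attains its norm, which by definition means the infimum in (\ref{stn}) is achieved by some representation. The key structural fact to exploit is the identity $B_{\sten X} = \overline{\aco}(\{x^N : x \in S_X\})$, combined with the observation that the set $K := \{x^N : x \in S_X\}$ is \emph{compact} in $\sten X$: indeed, $S_X$ is compact since $X$ is finite dimensional, and the map $x \mapsto x^N$ is continuous (in fact Lipschitz on the sphere by Lemma~\ref{estimation-symmetric}), so $K$ is the continuous image of a compact set. In finite dimensions the convex (and absolutely convex) hull of a compact set is again compact, and in particular closed. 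Hence $B_{\sten X} = \aco(K)$ \emph{without} needing to take closure.

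First I would fix $z \in \sten X$ with $\|z\| = 1$ (normalizing; the general case follows by scaling, and $z = 0$ is trivial). Since $z \in B_{\sten X} = \aco(K)$ and $\aco(K)$ is compact and convex in the finite-dimensional space $\sten X$, by Carath\'eodory's theorem $z$ can be written as a \emph{finite} absolutely convex combination $z = \sum_{k=1}^{m} \mu_k x_k^N$ with $x_k \in S_X$, $\mu_k \in \K$, and $\sum_{k=1}^m |\mu_k| \le 1 = \|z\|$. On the other hand the reverse inequality $\|z\| \le \sum_{k=1}^m |\mu_k|$ is automatic from the definition of the projective symmetric norm (\ref{stn}), since any such representation competes in the infimum. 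Therefore $\sum_{k=1}^m |\mu_k| = \|z\| = 1$, and this finite representation witnesses that $z$ attains its norm in the sense of definition~(3). One may then pad the finite sequence $(\mu_k)$ and $(x_k)$ with zeros to obtain the formally infinite representation $z = \sum_{n=1}^\infty \lambda_n x_n^N$ with $\sum_{n=1}^\infty |\lambda_n| = \|z\|$ required by the definition of $\NA_{\pi,s,N}(X)$.

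The main obstacle, and the only point that needs genuine care rather than routine verification, is establishing that $\aco(K)$ is already \emph{closed}, so that $B_{\sten X}$ equals $\aco(K)$ rather than merely its closure $\overline{\aco}(K)$; this is exactly where finite dimensionality is used and where the argument diverges from the general case. The cleanest route is to invoke compactness of $K$ together with the standard fact that in any finite-dimensional normed space the absolutely convex hull of a compact set is compact (this can be seen via Carath\'eodory together with a compactness argument on the simplex of coefficient vectors, or simply cited as a convexity fact). This is precisely the ingredient highlighted in the statement, mirroring the reasoning of \cite[Proposition~3.5]{DJRR} for ordinary projective tensor products, so I would model the finite-dimensional absolute-convex-hull step on that proof and then apply Lemma~\ref{characterization} or the direct infimum comparison above to conclude norm-attainment.
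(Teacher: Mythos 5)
Your proposal is correct and follows exactly the route the paper indicates: compactness of $\{x^N : x \in S_X\}$, compactness (hence closedness) of its absolutely convex hull in the finite-dimensional space $\sten X$ so that $B_{\sten X} = \aco(\{x^N : x\in S_X\})$ without closure, and the automatic reverse inequality from the definition of the norm. This matches the paper's own (sketched) argument, which cites the same two ingredients and the analogous proof in \cite{DJRR}.
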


As promised, we shall investigate when it is possible to approximate an arbitrary element $z \in \sten X$ by a norm-attaining symmetric tensor. Similarly to what it is done in \cite{DJRR}, this is achieved under the assumption that $X$ contains ``many'' 1-complemented subspaces. 

\begin{definition} Let $X$ be a Banach space. We say that $X$ has the metric $\pi$-property if given $\eps > 0$ and $\{x_1, \ldots, x_n\} \subseteq S_X$,  we can find a finite dimensional 1-complemented subspace $M \subseteq X$ and  $x_i' \in M$ with $\|x_i - x_i'\| < \eps$ for every $i=1,\ldots,n$. 
\end{definition}

We invite the reader to \cite{CASAZZA} (and also to \cite{JRZ, Lindenstrauss}) for more information about $\pi$-properties. Moreover, \cite[Example~4.12]{DJRR} sums up known examples of Banach spaces satisfying the metric $\pi$-property. Just to name a few, it is known that $L_p$-spaces, $L_1$-predual spaces, and Banach spaces with a finite dimensional decomposition with decomposition constant 1 satisfy such a property. 
Now, we present the following result analogous to \cite[Theorem~4.8]{DJRR}.

\begin{theorem} \label{propertyP} Let $X$ be a Banach space with the metric $\pi$-property. Then, every symmetric tensor can be approximated by symmetric tensors which attain their norms. In other words,
	\begin{equation*}
	\overline{\NA_{\pi, s, N} ( X)}^{\|\cdot\|_{\pi, s, N}} = \sten X.
	\end{equation*}
\end{theorem}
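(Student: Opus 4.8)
The plan is to approximate an arbitrary symmetric tensor by one living in a finite-dimensional $1$-complemented subspace, where Proposition~\ref{finite-dimensional} guarantees automatic norm-attainment, and then transport that norm-attainment back up to $\sten X$ using the ideal structure. First I would reduce to a dense subset: it suffices to approximate elements of the form $z = \sum_{i=1}^n \lambda_i x_i^N$ with $x_i \in S_X$, since such finite sums are dense in $\sten X$ by the very definition of the norm (\ref{stn}).

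Given such a $z$ and $\eps > 0$, I would invoke the metric $\pi$-property applied to the finite set $\{x_1, \ldots, x_n\} \subseteq S_X$ to obtain a finite-dimensional $1$-complemented subspace $M \subseteq X$ together with points $x_i' \in M$ satisfying $\|x_i - x_i'\| < \eps'$ for a suitably small $\eps'$ to be fixed at the end. Set $w := \sum_{i=1}^n \lambda_i (x_i')^N$. Since $1$-complemented subspaces are ideals, Theorem~\ref{ideal} tells us that $\sten M$ is an isometric subspace of $\sten X$, so computing the norm of $w$ inside $\sten M$ agrees with computing it inside $\sten X$. Because $M$ is finite-dimensional, Proposition~\ref{finite-dimensional} yields that $w \in \NA_{\pi, s, M, N}(M) = \sten M$; and since $\sten M$ sits isometrically in $\sten X$, the representation of $w$ realizing its norm in $\sten M$ also realizes its norm in $\sten X$. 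Hence $w \in \NA_{\pi, s, N}(X)$.

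It remains to control the distance $\|z - w\|_{\sten X}$. Normalizing the $x_i'$ (note $\|x_i'\| \to 1$ as $\eps' \to 0$, so for small $\eps'$ they are nonzero and close to $S_M$), I would estimate term by term using Lemma~\ref{estimation-symmetric}, which bounds $\|x_i^N - (x_i'/\|x_i'\|)^N\|_{\sten X}$ by a dimension-dependent constant times $\|x_i - x_i'/\|x_i'\|\|$. Combining the triangle inequality over the $n$ terms with the coefficients $|\lambda_i|$, one gets a bound of the shape $\left(\frac{N^{N+1}}{N!}\right)\left(\sum_{i=1}^n |\lambda_i|\right) \cdot C(\eps')$ where $C(\eps') \to 0$ as $\eps' \to 0$; a small additional correction absorbs the replacement of $x_i'$ by its normalization. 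Choosing $\eps'$ small enough (depending on $n$, the $\lambda_i$, $N$, and $\eps$) makes $\|z - w\|_{\sten X} < \eps$, which completes the approximation.

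The main obstacle is the normalization bookkeeping in the last step: Lemma~\ref{estimation-symmetric} is stated for vectors on the unit sphere, whereas the $x_i'$ produced by the metric $\pi$-property need not be unit vectors, so I would need to pass to $x_i'/\|x_i'\|$ and verify that both the normalization error and the homogeneity factor $\|x_i'\|^N$ stay controlled. This is routine but must be done carefully to ensure all error terms vanish simultaneously as $\eps' \to 0$. A subtler conceptual point worth double-checking is that the norm realizing $w$ in $\sten M$ is genuinely inherited by $\sten X$; this is exactly the content of the isometric embedding in Theorem~\ref{ideal}, so once that is in hand the transfer of norm-attainment is immediate.
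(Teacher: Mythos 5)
Your proposal is correct and follows essentially the same route as the paper: truncate to a finite sum, use the metric $\pi$-property to push the vectors into a finite-dimensional $1$-complemented subspace $M$, invoke Proposition~\ref{finite-dimensional} there, transfer norm-attainment back via the isometry $\|\cdot\|_{\sten M}=\|\cdot\|_{\sten X}$, and control the error with Lemma~\ref{estimation-symmetric}. Your attention to the normalization of the $x_i'$ (which need not be unit vectors) is a point the paper's own proof silently glosses over, and your handling of it is sound.
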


\begin{proof} Let $u \in S_{\sten X}$ and $\eps > 0$ be given. There are $(\lambda_n)_{n=1}^\infty \subseteq \mathbb{R} \setminus \{0\}$ and $(x_n)_{n=1}^{\infty} \subseteq S_X$ such that
	\begin{equation*}
	u = \sum_{n=1}^{\infty} \lambda_n x_n^N \ \ \ \mbox{and} \ \ \ \sum_{n=1}^{\infty} |\lambda_n| < 1 + \eps. 
	\end{equation*}
	Find $k \in \N$ large enough such that
	\begin{equation*}
	\|u - z\| < \frac{\eps}{2} \ \ \ \mbox{for} \ \ \ z = \sum_{n=1}^k \lambda_n x_n^N \in \otimes_{\pi, s, N} X.
	\end{equation*}
	Since $X$ has the metric $\pi$-property, we can find a finite dimensional space $M$ of $X$ which is $1$-complemented and such that, for every $n \in \{1,\ldots,k\}$, there exists $x_n' \in M$ such that
	\begin{equation*}
	\|x_n - x_n'\| < \frac{N!}{N^{N+1}} \cdot \frac{\eps}{4}.
	\end{equation*}	
	Define $z' := \sum_{n=1}^k \lambda_n (x_n')^N \in \otimes_{\pi, s, N} M$. Since $M$ is finite dimensional, by Proposition~\ref{finite-dimensional} we have $z' \in \NA_{\pi, s, N} (M)$, and, since $\norm{z'}_{\sten M}=\norm{z'}_{\sten X}$, we also have $z' \in \NA_{\pi, s, N} (X)$. Finally, by using Lemma~\ref{estimation-symmetric}, we have that 
	\begin{equation*}
	\|z' - z\| = \left\| \sum_{n=1}^k \lambda_n \Big( (x_n')^N - x_n^N \Big) \right\| \leq \sum_{n=1}^k |\lambda_n| \| (x_n')^N - x_n^N \| < \frac{\eps}{2}.
	\end{equation*}
Therefore, $\|z' - u\| < \eps$ and we are done.	
\end{proof}

Before proceeding, let us use Theorem~\ref{propertyP} to point out the following observation on the hypothesis of Theorem~\ref{everyelementnormattaining}.

\begin{remark} In Theorem~\ref{everyelementnormattaining}, the assumption that every element of $\sten X$ attains its norm {\it cannot} be relaxed to the case that $\NA_{\pi, s, N} (X)$ is dense in $\sten X$. Indeed, if $X = d_{*}(w, 1)$ with $w \in \ell_2 \setminus \ell_1$, then $X$ has monotone symmetric basis (see, for instance, \cite[Proposition~2.2]{W} and \cite[Lemma~2.2]{JP}) and, therefore, satisfies the metric $\pi$-property (see, for instance, \cite[Example~4.12]{DJRR}), which implies that $\NA_{\pi,s,N}(X)$ is dense in $\sten X$ by Theorem~\ref{propertyP}. On the other hand, as we already have mentioned in Remark~\ref{there-are-non-norm-attaining}, the set of all norm-attaining $N$-homogeneous polynomials is not dense in $\mathcal{P}(^N X)$ for $N \geq 2$.   
\end{remark} 

Our next goal will be obtaining the following result on the denseness of norm-attaining elements in $\sten X^*$ under the hypothesis of Radon-Nikod\'ym property (for short, RNP), see Theorem~\ref{th:RNPnuclear} and Corollary~\ref{cor:RNPtensor} for its counterpart for nuclear operators and projective tensor products, respectively.

\begin{theorem} \label{RNP+AP=symmetric} Let $X$ be a Banach space. Suppose that $X^*$ has the RNP and the AP. Then, every symmetric tensor in $\sten X^*$ can be approximated by symmetric tensors that attain their norms. In other words,
	\begin{equation*} 	
		\overline{\NA_{\pi, s, N} (X^*)}^{\|\cdot\|_{\pi,s,N}} = \sten X^*. 
	\end{equation*} 
\end{theorem}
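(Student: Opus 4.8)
The plan is to convert the statement, by means of Lemma~\ref{characterization}, into the purely geometric problem of producing a single $N$-homogeneous polynomial that attains its norm simultaneously at finitely many prescribed points of $S_{X^*}$, and then to manufacture those points and that polynomial using the Radon--Nikod\'ym property. Fix $z \in S_{\sten X^*}$ and $\eps > 0$, and write $z = \sum_{n=1}^\infty \lambda_n (x_n^*)^N$ with $x_n^* \in S_{X^*}$ and $\sum_n |\lambda_n| < 1 + \eps$. Truncating the series (the tail is small in both $\sten X^*$ and in $\sum_n|\lambda_n|$) I reduce to a finite sum $\sum_{n=1}^k \lambda_n (x_n^*)^N$, and by absorbing an $N$-th root of $\sign(\lambda_n)$ into each $x_n^*$ (which stays on $S_{X^*}$ since $(\theta x^*)^N=\theta^N (x^*)^N$) I may assume all $\lambda_n>0$. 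By Lemma~\ref{characterization}, it then suffices to find perturbed points $\tilde x_n^* \in S_{X^*}$ with $\sum_n \lambda_n \|x_n^* - \tilde x_n^*\|$ as small as desired and a polynomial $P \in S_{\mathcal P(^N X^*)}$ with $P(\tilde x_n^*) = 1$ for all $n$; indeed Lemma~\ref{estimation-symmetric} then upgrades the pointwise closeness to closeness of $\sum_n \lambda_n (\tilde x_n^*)^N$ to $z$ in $\sten X^*$, and the perturbed tensor lies in $\NA_{\pi,s,N}(X^*)$.

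The geometric engine is that $\sten X^*$ itself has the RNP. This is where the approximation property is consumed: since $X^*$ has the RNP and the AP, the full projective tensor power $\widehat{\otimes}_\pi^N X^*$ has the RNP by the standard Diestel--Uhl/Bourgain criterion, and $\sten X^*$ is isomorphic to a complemented subspace of it via symmetrization, so it inherits the RNP. Consequently $B_{\sten X^*}=\overline{\aco}(\{(x^*)^N : x^* \in S_{X^*}\})$ is the closed convex hull of its strongly exposed points, so $z$ can be approximated by a convex combination $\sum_j c_j e_j$ of points of $\strexp{B_{\sten X^*}}$. Each such $e_j$ is extreme, hence of the form $\theta_j (u_j^*)^N$ by Milman's theorem, and after a further harmless perturbation I may take $e_j$ to be a genuine pure power $(u_j^*)^N$ lying in an arbitrarily thin slice determined by its exposing polynomial $P_j \in S_{\mathcal P(^N X^*)}$.

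It remains to glue the individual exposing polynomials $P_j$ into a single $P$ with $P(u_j^*)=1$ for every $j$. The natural candidate is a suitable convex combination $P=\sum_j c_j P_j$ (or, equivalently, a combination of powers of the exposing functionals), and one checks that it has norm one and value $1$ at each $u_j^*$, which by the finite-dimensional mechanism of Proposition~\ref{finite-dimensional} and Lemma~\ref{characterization} certifies that $\sum_j c_j (u_j^*)^N \in \NA_{\pi,s,N}(X^*)$; tracking this back through Lemma~\ref{estimation-symmetric} gives a norm-attaining tensor within $\eps$ of $z$.

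I expect the last step to be the crux. Distinct strongly exposed points carry distinct exposing functionals, so no single $P_j$ can work, and in the combination $\sum_j c_j P_j$ one must control the cross terms $P_j(u_i^*)$ for $i\neq j$: unless these are negligible, $\|P\|$ is strictly larger than each value $P(u_i^*)$ and the simultaneous attainment fails. This is exactly where the full strength of the RNP is needed: by shrinking the exposing slices enough (and, if necessary, merging or discarding nearly coincident points so that the finitely many $u_j^*$ are genuinely separated) one forces $P_j(u_i^*)$ to be negligible off the diagonal, so that $P$ attains its norm precisely at the chosen points. Making this quantitative bookkeeping close simultaneously for all finitely many points, while keeping $\sum_n \lambda_n\|x_n^*-\tilde x_n^*\|$ below the prescribed threshold, is the technical heart of the argument.
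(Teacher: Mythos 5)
Your first half is sound and matches the paper: truncating to a finite sum, absorbing signs, using Lemma~\ref{characterization} as the certificate of norm-attainment, and deducing that $\sten X^*$ has the RNP from the RNP and AP of $X^*$ (the paper also records, via \cite[Proposition~1]{BR} rather than Milman's theorem, that $\ext{B_{\sten X^*}}\subseteq\{\pm\varphi^N:\varphi\in S_{X^*}\}$; Milman needs a compactness justification you do not supply). The gap is in the second half, and it is not merely technical. First, approximating $z$ in norm by a convex combination $\sum_j c_j e_j$ of strongly exposed points of $B_{\sten X^*}$ does not by itself yield a norm-attaining tensor: one needs the representation to be optimal, i.e.\ $\bigl\|\sum_j c_j e_j\bigr\|=\sum_j c_j$, and membership of $z$ in the closed convex hull of strongly exposed points gives no control on this. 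Second, and more seriously, the proposed repair --- gluing the exposing polynomials into $P=\sum_j c_j P_j$ while forcing the cross terms $P_j(u_i^*)$ to be negligible --- goes in exactly the wrong direction: if $P_j(u_i^*)\approx 0$ for $i\neq j$, then $P(u_i^*)\approx c_i<1\geq\|P\|$ is impossible to reconcile with $P(u_i^*)=1$, so $P$ norms none of the points $u_i^*$ and Lemma~\ref{characterization} cannot be invoked. What you would actually need is $P_j(u_i^*)\approx 1$ for all $i,j$ simultaneously, which cannot be arranged for genuinely distinct strongly exposed points.

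The paper sidesteps the gluing problem entirely by choosing the single certificate \emph{first}. Given the (normalized) target, pick by Hahn--Banach one $P\in S_{\mathcal{P}(^N X^*)}$ norming it, form the face $C=\{w\in B_{\sten X^*}: P(w)=1\}$, and use the RNP to write $C=\cconv\ext{C}$; since $C$ is a face, $\ext{C}\subseteq\ext{B_{\sten X^*}}\subseteq\{\pm\varphi^N:\varphi\in S_{X^*}\}$, and every finite convex combination of points of $C$ again lies in $C$, so its norm equals the sum of its coefficients, certified by the one polynomial $P$. This is exactly Lemma~\ref{lemma:RNP} combined with Lemma~\ref{th:RNPsym}, and no perturbation via Lemma~\ref{estimation-symmetric} and no cross-term bookkeeping are needed. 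To salvage your argument, replace ``strongly exposed points of the ball'' by ``extreme points of the face of the ball cut out by a norming polynomial of $z$''; the simultaneous attainment then comes for free.
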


In order to prove Theorem~\ref{RNP+AP=symmetric}, we need two preliminary results. Let us start with the following general lemma for spaces satisfying the RNP, which will also be used to prove Theorem~\ref{th:RNPnuclear}.

\begin{lemma}\label{lemma:RNP} Let $X$ be a Banach space with the RNP. Then,  
	\[A := \left\{ x=\sum_{i=1}^n \lambda_i x_i \in X: \lambda_1,\ldots, \lambda_n>0, x_1,\ldots, x_n\in \ext{B_X}, \norm{x}=\sum_{i=1}^n \lambda_i \right\}\] 
	is dense in $X$. 
\end{lemma}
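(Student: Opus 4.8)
The plan is to exploit two features of the set $A$: that it is a \emph{cone}, and that membership in it is governed by a single supporting functional. For the cone property, note that if $x=\sum_{i=1}^n\lambda_i x_i$ witnesses $x\in A$ and $t>0$, then $tx=\sum_{i=1}^n(t\lambda_i)x_i$ with $\norm{tx}=t\norm{x}=\sum_{i=1}^n t\lambda_i$, so $tx\in A$. Hence $0\in\overline{A}$, and for density it is enough to approximate each $x\in S_X$ by norm-one elements of $A$ (a point $x_0\neq 0$ is then handled by approximating $x_0/\norm{x_0}$ and rescaling). So first I would fix $x\in S_X$ and $\eps>0$ and choose, by Hahn--Banach, a functional $f\in S_{X^*}$ with $f(x)=1$ (adjusting the phase in the complex case).

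The central idea is to pass to the face of $B_X$ exposed by $f$. I would set
\[
F:=\{\,y\in B_X:\ \re f(y)=1\,\},
\]
a nonempty (it contains $x$), closed, bounded, convex subset of $X$. It is in fact a \emph{face} of $B_X$: since $\re f\le 1$ on $B_X$, any convex combination of points of $B_X$ lying in $F$ forces each of those points to satisfy $\re f=1$ already. A standard consequence of $F$ being a face is that $\ext F\subseteq\ext{B_X}$. Now I would invoke the Radon--Nikod\'ym property on the set $F$: since $X$ has the RNP and $F$ is a closed bounded convex subset of $X$, it equals the closed convex hull of its extreme points (indeed of its strongly exposed points), so $F=\cco(\ext F)$.

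Because $x\in F=\cco(\ext F)$, I can select a finite convex combination $a=\sum_{i=1}^n\mu_i e_i$ with $\mu_i>0$, $\sum_{i=1}^n\mu_i=1$ and $e_i\in\ext F\subseteq\ext{B_X}$ such that $\norm{x-a}<\eps$. It then remains only to verify $a\in A$. Since every $e_i\in F$ has $f(e_i)=1$ and $\norm{e_i}=1$,
\[
\norm{a}\ \ge\ \re f(a)=\sum_{i=1}^n\mu_i\,\re f(e_i)=\sum_{i=1}^n\mu_i=\sum_{i=1}^n\mu_i\norm{e_i}\ \ge\ \norm{a},
\]
so all these quantities coincide; in particular $\norm{a}=\sum_{i=1}^n\mu_i$ with $\mu_i>0$ and $e_i$ extreme, i.e. $a\in A$ and $\norm{a}=1$. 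Undoing the initial normalization yields an element of $A$ within $\eps$ of an arbitrary point of $X$.

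The only nonelementary ingredient — and therefore the step I expect to be the crux — is the geometric characterization of the RNP that I apply to $F$: in a space with the Radon--Nikod\'ym property, \emph{every} closed bounded convex set is the closed convex hull of its strongly exposed (a fortiori extreme) points. Everything else is elementary convexity: the verification that $F$ is a face, the inclusion $\ext F\subseteq\ext{B_X}$, and the short norm identity above. The one point requiring a little care is the complex scalar field, where one works throughout with $\re f$ and uses that $\re f(y)=1$ together with $\norm{y}\le 1$ forces $f(y)=1$; this keeps both the face structure and the norm computation valid.
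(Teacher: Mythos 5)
Your proposal is correct and follows essentially the same route as the paper: pick a norming functional, pass to the face of $B_X$ it determines, use the RNP to write that face as the closed convex hull of its extreme points (which are extreme in $B_X$), and verify membership in $A$ by evaluating the functional on a finite convex combination. Your explicit handling of the complex case via $\re f$ and the up-front cone/rescaling remark are minor refinements of the same argument.
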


\begin{proof} Let $x_0 \in S_X$. Pick $x^* \in S_{X^*}$ to be such that $x^*(x_0) = 1$. Now, let us consider the closed convex set $C:= \{x\in B_{X} : x^*(x)=1\}$. Since $X$ has the RNP, we have that $C=\cconv\ext{C}$. Moreover, $C$ is a face of $B_{X}$ and so $\ext{C}\subseteq \ext{B_X}$. Thus, $x_0 \in \cconv\{x\in \ext{B_X} : x^*(x)=1\}$. To conclude, it suffices to check that 
\begin{equation*} 
\co\{x\in \ext{B_X}: x^*(x)=1\}\subseteq A.
\end{equation*} 
To this end, take $v=\sum_{i=1}^n \lambda_i x_i$, where $x_i\in \ext{B_X}$, $x^*(x_i)=1$ and $\lambda_i>0$ for all $i=1, \ldots, n$, and $\sum_{i=1}^n \lambda_i =1$. Then,
	\[ 1\geq \norm{v} \geq \langle x^*, v\rangle = \sum_{i=1}^n \lambda_i =1 \]
	and so $v\in A$. A straightforward homogeneity argument allows us to restrict the assumption $\Vert x_0\Vert=1$, and the lemma is proved. \end{proof}

We also need the following result, which is a consequence of Lemma~\ref{characterization} and Lemma~\ref{lemma:RNP}.

\begin{lemma}\label{th:RNPsym} Let $X$ be a Banach space. Assume that $\sten X$ has the RNP and that $\ext{B_{\sten X}}\subseteq \{\pm x^N: x\in B_X \}$. Then, every symmetric tensor can be approximated by symmetric tensors which attain their norms. In other words, 
\begin{equation*} 
\overline{\NA_{\pi, s, N} (X)}^{\|\cdot\|_{\pi,s,N}} = \sten X. 
\end{equation*} 
\end{lemma}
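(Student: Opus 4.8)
The plan is to obtain the statement by feeding the space $\sten X$ itself into Lemma~\ref{lemma:RNP} and then reading off, via the structural hypothesis on extreme points, that the dense set produced consists entirely of norm-attaining tensors. Concretely, I would first apply Lemma~\ref{lemma:RNP} to the Banach space $\sten X$, which is legitimate since $\sten X$ has the RNP by assumption. This yields that the set
\[
A = \left\{ z = \sum_{i=1}^n \lambda_i e_i : \lambda_i>0,\ e_i\in\ext{B_{\sten X}},\ \|z\|=\sum_{i=1}^n\lambda_i \right\}
\]
is dense in $\sten X$. It therefore suffices to prove the inclusion $A\subseteq \NA_{\pi,s,N}(X)$, because then $\NA_{\pi,s,N}(X)$ contains a dense subset and is hence dense, which is exactly the desired conclusion.

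To establish that inclusion, I would fix $z\in A$ with $z=\sum_{i=1}^n \lambda_i e_i$ as above and unpack the extreme points using the hypothesis $\ext{B_{\sten X}}\subseteq\{\pm x^N : x\in B_X\}$: each $e_i=\sigma_i x_i^N$ for some sign $\sigma_i\in\{-1,1\}$ and some $x_i\in B_X$. Since $e_i$ is an extreme point we have $\|e_i\|=1$, and because $\|x^N\|_{\pi,s,N}=\|x\|^N$ for every $x\in X$ (immediate from the definition of the norm together with the polynomial $y\mapsto\phi(y)^N$ associated with a norming functional $\phi$ of $x$), this forces $\|x_i\|=1$, i.e.\ $x_i\in S_X$. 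Setting $\mu_i:=\sigma_i\lambda_i$, so that $|\mu_i|=\lambda_i$, I then obtain a representation
\[
z=\sum_{i=1}^n \mu_i x_i^N, \qquad x_i\in S_X, \qquad \sum_{i=1}^n |\mu_i| = \sum_{i=1}^n \lambda_i = \|z\|.
\]
Since $\|z\|\le\sum_{i=1}^n|\mu_i|$ holds for \emph{every} representation by the definition of the projective symmetric norm, this particular representation attains the infimum; equivalently, condition~(1) of Lemma~\ref{characterization} is satisfied, so $z\in\NA_{\pi,s,N}(X)$. This gives $A\subseteq \NA_{\pi,s,N}(X)$ and completes the argument.

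The only points needing care are routine: the norm identity $\|x^N\|_{\pi,s,N}=\|x\|^N$ and the bookkeeping of signs that converts the positive weights $\lambda_i$ of the RNP decomposition into the moduli $|\mu_i|$. I do not anticipate a genuine obstacle, since all the analytic content is already packaged inside Lemma~\ref{lemma:RNP}; the hypothesis $\ext{B_{\sten X}}\subseteq\{\pm x^N\}$ is invoked precisely once, to replace an abstract extreme point of $B_{\sten X}$ by an honest power $x^N$, and this is exactly the step that a general RNP space could not supply on its own.
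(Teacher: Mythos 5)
Your proposal is correct and follows essentially the same route as the paper: apply Lemma~\ref{lemma:RNP} to the space $\sten X$, use the hypothesis on $\ext{B_{\sten X}}$ to rewrite each extreme point as $\pm x_i^N$ with $x_i\in S_X$, absorb the signs into the coefficients, and conclude that the resulting dense set lies in $\NA_{\pi,s,N}(X)$. The paper states this more tersely, while you additionally spell out the (correct) verification that $\|x^N\|_{\pi,s,N}=\|x\|^N$ forces $x_i\in S_X$.
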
	
\begin{proof}
	By Lemma~\ref{lemma:RNP}, the set 
	\begin{align*} A&= \left\{ z=\sum_{i=1}^n \varepsilon_i\lambda_i x_i^N\in \sten X: \varepsilon_i\in \{1,-1\}, \lambda_i>0, x_i\in S_X,  \norm{z}=\sum_{i=1}^n \lambda_i \right\}\\
		&= \left\{ z=\sum_{i=1}^n \lambda_i x_i^N\in \sten X: \lambda_i\in \mathbb R, x_i\in S_X, \norm{z}=\sum_{i=1}^n |\lambda_i| \right\}
		\end{align*}
	is dense in $X$. Clearly, $A\subseteq \NA_{\pi,s,N}(X)$. 
\end{proof}

Now we are ready to prove Theorem~\ref{RNP+AP=symmetric}.

\begin{proof}[Proof of Theorem~\ref{RNP+AP=symmetric}]  Let us observe first that if $X^*$ has the RNP and the AP, then $\sten X^*$ has the RNP. Indeed, by using \cite[Subsection 2.3]{F1}, we have that $\sten X^*$ is isomorphic to a subspace of $X^* \pten \dots \pten X^*$. Since $X^*$ has the RNP and AP, we have that $X^* \pten \dots \pten X^*$ has the RNP (see \cite[Theorem~VIII.4.7]{DU}) and then we can conclude that $\sten X^*$ has the RNP. Now by using \cite[Proposition~1]{BR}, we have that
	\begin{equation*}
		\ext{B_{\sten X^*}} = \ext{B_{(\siten X)^*}} \subseteq \{\pm \varphi^N: \varphi \in X^*, \|\varphi\| = 1\} 
	\end{equation*}
	and by Lemma~\ref{th:RNPsym}, the set $\NA_{\pi, s, N} (X^*)$ is dense in $\sten X^*$, as desired. 
\end{proof}

Notice that if $X^*$ has the RNP, then $\mathcal{P}_I (^N X)$, the Banach space of all $N$-homogeneous integral polynomials on $X$, has the RNP. Consequently, $\siten X$ cannot contain an isomorphic copy of $\ell_1$, which in turn implies that $\mathcal{P}_I (^N X)$ is isometrically isomorphic to the Banach space $\mathcal{P}_{\text{nu}} (^N X)$ of all $N$-homogeneous nuclear polynomials on $X$ (see \cite[Theorem~2]{BR}).

\begin{theorem}\label{thm:no_ell_1}
Let $X$ be a Banach space. Suppose that $\siten X$ does not contain a copy of $\ell_1$. Then the set of norm-attaining elements in $\mathcal{P}_{\text{nu}} (^N X)$ is $w^*$-dense in $\mathcal{P}_{\text{nu}} (^N X) = (\siten X)^*$. 
\end{theorem}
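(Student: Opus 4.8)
The plan is to turn the hypothesis into the two structural facts it is designed to produce, and then to run a soft argument combining the Bishop--Phelps theorem (for norm-density) with the Krein--Milman theorem on $w^*$-exposed faces (for $w^*$-density), reading off membership in $\NA_{\pi,s,N}(X^*)$ directly from Lemma~\ref{characterization}. The first thing I would record is precisely where ``$\siten X$ contains no copy of $\ell_1$'' is used. By \cite[Theorem~2]{BR} (as recalled in the remark preceding the statement) this hypothesis gives the isometric identification $\mathcal{P}_{\text{nu}}(^N X)=\mathcal{P}_I(^N X)=(\siten X)^*$, so that $E:=\siten X$ is genuinely a predual of $\mathcal{P}_{\text{nu}}(^N X)$ and the $w^*$-topology in the statement is $\sigma((\siten X)^*,\siten X)$. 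Combined with \cite[Proposition~1]{BR} this also gives $\ext{B_{(\siten X)^*}}\subseteq\{\lambda\varphi^N:\ |\lambda|=1,\ \varphi\in S_{X^*}\}$. I expect these to be the only two places the hypothesis enters.

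Next I would fix $z_0\in\mathcal{P}_{\text{nu}}(^N X)$ and, using that $\NA_{\pi,s,N}(X^*)$ is a cone, reduce to $\|z_0\|=1$; it then suffices to intersect an arbitrary basic $w^*$-neighborhood $V$ of $z_0$, determined by $u_1,\dots,u_p\in\siten X$ and $\eps>0$, with $\NA_{\pi,s,N}(X^*)$. Applying the Bishop--Phelps theorem to the Banach space $\siten X$ produces a functional $z_0'\in (\siten X)^*$ with $\|z_0'-z_0\|$ arbitrarily small that attains its norm at some $u'\in S_{\siten X}$; after normalizing we may assume $\|z_0'\|=1$ and $\langle z_0',u'\rangle=1$, and for a fine enough approximation $z_0'\in V$ (since $|\langle z_0'-z_0,u_j\rangle|\le\|z_0'-z_0\|\,\|u_j\|$). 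I would then consider the $w^*$-exposed face
\[
C:=\{z\in B_{(\siten X)^*}:\ \langle z,u'\rangle=1\},
\]
which contains $z_0'$ and is convex and $w^*$-compact precisely because $u'$ lies in the predual $\siten X$.

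Finally, by the Krein--Milman theorem $C=\overline{\operatorname{co}}^{\,w^*}(\ext{C})$, so the finite convex combinations of $\ext{C}$ are $w^*$-dense in $C$; since $z_0'\in C$ and $V$ is a $w^*$-neighborhood of $z_0'$, I may choose such a combination $v=\sum_{i=1}^m t_i\lambda_i\varphi_i^N\in V$, where $t_i>0$, $\sum_i t_i=1$, $|\lambda_i|=1$ and $\varphi_i\in S_{X^*}$, using $\ext{C}\subseteq\ext{B_{(\siten X)^*}}\subseteq\{\lambda\varphi^N\}$. Because each extreme point lies in $C$, the norm-one polynomial $\widehat{u'}\in S_{\mathcal{P}(^N X^*)}$ obtained from the canonical isometric embedding $\siten X\hookrightarrow(\sten X^*)^*=\mathcal{P}(^N X^*)$ satisfies $\widehat{u'}(\varphi_i)=\sign(\lambda_i)$ for every $i$. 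Hence Lemma~\ref{characterization}, applied to $X^*$ as base space with the single supporting polynomial $\widehat{u'}$, gives $v\in\NA_{\pi,s,N}(X^*)$, so $V\cap\NA_{\pi,s,N}(X^*)\neq\emptyset$ and the $w^*$-density follows.

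The main obstacle, and the point I would be most careful about, is conceptual rather than computational: $\NA_{\pi,s,N}(X^*)$ is \emph{not} the set of functionals attaining their norm on $\siten X$, so the soft density of norm-attaining functionals does not suffice by itself. The bridge is that a finite convex combination of extreme points of an exposed face automatically admits a \emph{common} norm-one supporting polynomial, and the crucial subtlety is that this polynomial must be taken $w^*$-continuous --- i.e.\ coming from the predual element $u'\in\siten X$ --- which is exactly what makes $C$ both $w^*$-compact (so Krein--Milman applies) and usable as the single polynomial $P$ in Lemma~\ref{characterization}. A secondary care point is the correct interleaving of the norm-approximation coming from Bishop--Phelps with the $w^*$-approximation coming from Krein--Milman, which is what lets the two density statements be chained inside one $w^*$-neighborhood.
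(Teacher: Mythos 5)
Your proposal is correct and follows essentially the same route as the paper: Bishop--Phelps on $\siten X$ to get a norm-close $P_0$ attaining its norm at some $u_0\in S_{\siten X}$, then Krein--Milman applied to the $w^*$-compact face $C=\{Q\in B_{(\siten X)^*}:\langle Q,u_0\rangle=1\}$ together with \cite[Proposition~1]{BR} to write elements of $C$ as $w^*$-limits of finite convex combinations of $\pm(x^*)^N$ supported by $u_0$, which are norm-attaining by Lemma~\ref{characterization}. Your explicit remarks on where the no-copy-of-$\ell_1$ hypothesis enters and on viewing $u_0$ as the common supporting polynomial via $\siten X\hookrightarrow\mathcal{P}(^N X^*)$ are details the paper leaves implicit, but the argument is the same.
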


\begin{proof}
Let $P \in S_{\mathcal{P}_{\text{nu}} (^N X)}$ be given. By the Bishop-Phelps theorem \cite{BP}, given $\eps >0$, we can find $P_0 \in S_{(\siten X)^*}$ so that $P_0$ attains its norm at some $u_0 \in S_{\siten X}$ and $\| P_0 - P \| < \eps$. We will prove that $P_0$ can be approximated by norm-attaining elements in $\mathcal{P}_{\text{nu}} (^N X)$ in the $w^*$-topology. For this, let us consider the set 
\begin{equation*} 
C := \Big\{ Q \in B_{(\siten X)^*} : \langle Q, u_0 \rangle =1 \Big\}.
\end{equation*} 
Notice that $C$ is a $w^*$-compact and convex set. It follows from Krein-Milman theorem (see, for instance, \cite[Theorem 7.68]{AB}) that $C = \overline{\text{co}}^{w^*} ( \ext{C} )$. As $C$ is a face of $B_{(\siten X)^*}$, thanks to \cite[Proposition~1]{BR}, we have that 
\begin{equation*} 
C \subseteq \overline{\text{co}}^{w^*} \Big( \Big\{ \pm (x^*)^N  : x^* \in S_{X^*}, \, \langle \pm (x^*)^N , u_0 \rangle =1 \Big\} \Big). 
\end{equation*} 
It follows from Lemma~\ref{characterization} that $P_0$ can be approximated by norm-attaining elements in $\mathcal{P}_{\text{nu}} (^N X)$ in the $w^*$-topology and we are done.
\end{proof} 

Recall that $\mathcal{P}_{\text{nu}} (^N X)$ coincides with $\sten X^*$ isometrically whenever $X^*$ has the AP. It is known that the James-Hagler space $JH$ is an example of a Banach space whose dual does not have the RNP while the symmetric injective tensor product $\siten JH$ does not contain a copy of $\ell_1$ (see \cite{Hag}).  
Thus, the assumption in Corollary~\ref{cor:no_ell_1} below is strictly weaker than that of Theorem~\ref{RNP+AP=symmetric}. 

\begin{corollary}\label{cor:no_ell_1}
Let $X$ be a Banach space such that $X^*$ has the AP. If $\siten X$ does not contain a copy of $\ell_1$, then the set $\NA_{\pi, s, N} (X^*)$ is $w^*$-dense in $\sten X^*$. 
\end{corollary}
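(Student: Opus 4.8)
The plan is to deduce the statement directly from Theorem~\ref{thm:no_ell_1} by transcribing it from the language of nuclear polynomials into that of symmetric tensors. The bridge is the isometric identification $\mathcal{P}_{\text{nu}} (^N X) = \sten X^*$, which is available precisely because $X^*$ has the AP (as recalled just before the statement). Since Theorem~\ref{thm:no_ell_1} already delivers $w^*$-density of the norm-attaining elements in $\mathcal{P}_{\text{nu}} (^N X) = (\siten X)^*$ under the hypothesis that $\siten X$ contains no copy of $\ell_1$, the whole task reduces to checking that this conclusion says exactly what we want about $\sten X^*$.

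First I would record that, since $X^*$ has the AP, the canonical map $\sten X^* \to \mathcal{P}_{\text{nu}} (^N X)$ sending $\sum_n \lambda_n (\varphi_n)^N$ to the associated nuclear polynomial is an isometric isomorphism. Moreover, under this identification both spaces have the same predual $\siten X$, so the two induced $w^*$-topologies coincide. It therefore suffices to verify that the elements certified as norm-attaining in the proof of Theorem~\ref{thm:no_ell_1} correspond exactly to the members of $\NA_{\pi, s, N} (X^*)$.

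Next I would check this matching. In the proof of Theorem~\ref{thm:no_ell_1} each approximating element is obtained as a $w^*$-limit of finite combinations $\sum_i \mu_i \varepsilon_i (x_i^*)^N$ lying in the face $C$, and its norm-attainment is certified through Lemma~\ref{characterization} applied to $\sten X^*$: there is a norm-one polynomial $P \in S_{\mathcal{P}(^N X^*)} = S_{(\sten X^*)^*}$ with $P(x_i^*) = \sign(\varepsilon_i \mu_i)$ for every $i$, which is condition (2) of that lemma. By the equivalences in Lemma~\ref{characterization}, such tensors are precisely those in $\sten X^*$ attaining their projective symmetric norm, i.e. the elements of $\NA_{\pi, s, N} (X^*)$.

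Combining these observations with Theorem~\ref{thm:no_ell_1}—whose hypothesis that $\siten X$ contains no copy of $\ell_1$ is exactly the one assumed here—then yields that $\NA_{\pi, s, N} (X^*)$ is $w^*$-dense in $\mathcal{P}_{\text{nu}} (^N X) = \sten X^*$, as desired. I expect the only genuine point requiring care to be the bookkeeping in the second and third paragraphs: confirming that the isometric AP-identification carries the $w^*$-topology of $(\siten X)^*$ onto that of $\sten X^*$, and that the notion of norm-attainment used via Lemma~\ref{characterization} in Theorem~\ref{thm:no_ell_1} coincides verbatim with membership in $\NA_{\pi, s, N} (X^*)$. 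Once these identifications are pinned down, the corollary is an immediate consequence of Theorem~\ref{thm:no_ell_1}.
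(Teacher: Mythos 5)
Your proposal is correct and follows exactly the route the paper intends: the corollary is deduced from Theorem~\ref{thm:no_ell_1} via the isometric identification $\mathcal{P}_{\text{nu}}(^N X)=\sten X^*$ afforded by the AP of $X^*$, which the paper records in the sentence immediately preceding the statement. Your additional bookkeeping (matching of the $w^*$-topologies through the common predual $\siten X$ and of the norm-attainment notions via Lemma~\ref{characterization}) only makes explicit what the paper leaves implicit.
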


Let us observe that so far we have presented only positive results on the ($w^*$-) denseness of symmetric tensors which attain their norms in $\sten X$. In fact, we do not know whether the set $\NA_{\pi,s,N}(X)$ is dense in $\sten X$ for {\it every} Banach space $X$. The first candidate that would pop up in our minds would be a Banach space $X$ such that the set $\NA_{\pi}(X \pten X)$ is not dense in $X \pten X$. Nevertheless, the techniques from \cite[Section 5]{DJRR} (where the authors show that there exist subspaces $X$ of $c_0$ and $Y$ of the Read's space $\mathcal{R}$ such that the set $\NA_{\pi}(X \pten Y^*)$ is not dense) do not seem to work. Indeed, the idea behind was requiring that every element of $\NA(X,Y^*)$ has finite rank, and then working with a bounded operator $T:X\longrightarrow Y$ which can not be approximated by finite rank operators from the failure of the approximation property. This construction is doable since $X$ and $Y$ are isomorphic and then $T$ can be taken as a formal identity thanks to classical results on AP. However, for one such example of the form $X\pten X$, we would need to work with an operator $T:X\longrightarrow X^*$, for a certain subspace $X$ of $c_0$, which is not approximable by finite rank operators and, to the best of our knowledge, the existence of such a space $X$ and such a $T$ is unknown. Despite that, we shall conclude this section by showing that this open problem is separably determined. 

\begin{theorem}\label{thm:separable_determined}
	Let $N \in \N$ be fixed. If $\NA_{\pi, s, N} (Y)$ is dense in $\sten Y$ for every separable Banach space $Y$, then $\NA_{\pi, s, N} (X)$ is dense in $\sten X$ for every Banach space $X$. 
\end{theorem}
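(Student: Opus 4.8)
The plan is to reduce the general (possibly nonseparable) case to the separable hypothesis by finding, for each symmetric tensor we wish to approximate, a separable subspace $Y \subseteq X$ that is large enough to (i) contain a representation of the tensor and (ii) satisfy $\norm{\cdot}_{\sten Y} = \norm{\cdot}_{\sten X}$ on $\sten Y$. The tool for (ii) is already at hand: by Theorem~\ref{ideal}, if $Y$ is an ideal of $X$, then $\sten Y$ is an isometric subspace of $\sten X$. So the whole argument hinges on producing a separable \emph{ideal} of $X$ through a prescribed countable set, and everything else is bookkeeping.

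Concretely, I would fix $z \in \sten X$ and $\eps > 0$, and write $z = \sum_{n=1}^{\infty} \lambda_n x_n^N$ with $x_n \in S_X$ and $\sum_{n=1}^{\infty} |\lambda_n| < \infty$ (every element of the projective symmetric tensor product admits such an absolutely convergent representation). Set $Z_0 := \overline{\spann}\{x_n : n \in \N\}$, which is separable. Granting the key lemma that $Z_0$ is contained in a separable ideal $Y$ of $X$, Theorem~\ref{ideal} yields that $\sten Y$ sits isometrically inside $\sten X$; since the partial sums $\sum_{n=1}^{k} \lambda_n x_n^N$ lie in $\otimes_{\pi, s, N} Y$ and converge to $z$, we get $z \in \sten Y$ with $\norm{z}_{\sten Y} = \norm{z}_{\sten X}$. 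Applying the hypothesis to the separable space $Y$ produces $w \in \NA_{\pi, s, N}(Y)$ with $\norm{z - w}_{\sten Y} < \eps$. The transfer back is immediate: writing $w = \sum_n \mu_n y_n^N$ with $y_n \in S_Y \subseteq S_X$ and $\norm{w}_{\sten Y} = \sum_n |\mu_n|$, the isometric identification gives $\norm{w}_{\sten X} = \norm{w}_{\sten Y} = \sum_n |\mu_n|$, so the very same representation witnesses $w \in \NA_{\pi, s, N}(X)$; moreover $\norm{z - w}_{\sten X} = \norm{z - w}_{\sten Y} < \eps$. As $\eps$ is arbitrary, $\NA_{\pi, s, N}(X)$ is dense in $\sten X$.

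The main obstacle is precisely the separable-ideal lemma: every separable subspace $Z_0$ of a Banach space $X$ is contained in a separable ideal $Y$ of $X$. This is where the genuine difficulty lies, and it is a matter of standard separable-determination theory rather than of tensor products. I would prove it by the usual closing-off construction: using Kalton's identification of ideals with locally complemented subspaces \cite{Kalton} together with the principle of local reflexivity, build an increasing chain of separable subspaces $Z_0 \subseteq Z_1 \subseteq \cdots$ so that at each stage the finite-dimensional Hahn--Banach extension data needed to witness local complementation of a countable dense family of test configurations is absorbed into the next space, and then take $Y = \overline{\bigcup_n Z_n}$. The delicate point is that the defining condition of an ideal quantifies over \emph{all} finite-dimensional subspaces of the (nonseparable) space $X$, so one must argue that it suffices to control a separable cofinal family of such configurations; this is exactly the content of the local-reflexivity and separable-determination machinery, and it runs entirely parallel to the separable reduction carried out for nuclear operators and projective tensor products in \cite{DJRR}.
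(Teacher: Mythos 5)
Your argument is correct and is essentially the paper's proof: reduce to a separable ideal $Y \supseteq \overline{\spann}\{x_n\}$, use Theorem~\ref{ideal} to get the isometry $\norm{\cdot}_{\sten Y}=\norm{\cdot}_{\sten X}$, apply the hypothesis in $Y$, and transfer the norm-attaining representation back. The separable-ideal lemma that you identify as the main obstacle is a known result which the paper simply cites (Sims--Yost \cite[Proposition~2]{Sims-Yost}, see also \cite[Lemma~4.3]{HWW}), so no closing-off construction needs to be carried out.
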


\begin{proof} 
	Let $X$ be a Banach space, $z \in \sten X$, and let $\eps >0$ be given. Choose a representation $z = \sum_{n=1}^\infty \lambda_n x_n^N$ with $(\lambda_n)_{n=1}^{\infty} \subseteq \mathbb{R} \setminus \{0\}$ and $(x_n)_{n=1}^{\infty} \subseteq S_X$ satisfying that $\sum_{n=1}^\infty |\lambda_n| < \| z \|_{\sten X} + \eps$. Let $Z := \overline{\spann} \{ x_n : n \in \N \}$. Thus, $Z$ is a separable Banach space. By \cite[Proposition~2]{Sims-Yost} (see also  \cite[Lemma~4.3]{HWW}), there exists a separable ideal $Y$ of $X$ such that $Z \subseteq Y$. As $Y$ is an ideal, by Theorem~\ref{ideal}, we have that $\| z \|_{\sten X} = \| z \|_{\sten Y}$. By the hypothesis, there exists $z' = \sum_{n=1}^\infty \mu_n y_n^N \in \NA_{\pi, s, N} (Y)$ with $\|z'\|_{\sten Y} = \sum_{n=1}^\infty |\mu_n|$ which satisfies that $\| z- z'\|_{\sten Y} < \eps$. Considering $z'$ as an element of $\sten X$, we notice that 
	\[
	\sum_{n=1}^\infty |\mu_n| = \|z'\|_{\sten Y} = \|z' \|_{\sten X} \leq \sum_{n=1}^\infty |\mu_n|,
	\]
	which implies that $z' \in \NA_{\pi, s, N} (X)$. Finally, $\|z-z'\|_{\sten X} = \| z-z'\|_{\sten Y} < \eps$. 
\end{proof}

\section{Results for Projective Tensor Products}

In this section, we present some results on the denseness of tensors in projective tensor products of Banach spaces.

Let us first notice that when $X = L_1(\mathbb{T})$, where the unit circle $\mathbb{T}$ is equipped with the Haar measure, and $Y$ is the two-dimensional Hilbert space $\ell_2^2$, we have that $\NA_{\pi}(X \pten Y) \not= X \pten Y$ \cite[Example 3.12 (a)]{DJRR}. This shows that finite dimensionality on just one of the factors is not enough to guarantee that every tensor in $X \pten Y$ is norm-attaining. Nevertheless, we have the following result.

\begin{theorem} \label{theorem-polyhedral-dual} Let $X$ be a Banach space with $B_X = \co{(\{x_1, \ldots, x_n\})}$ for some $x_1, \ldots, x_n \in S_X$ and assume that $Y$ is a dual space. Then, every tensor in $X \pten Y$ attains its projective tensor norm. In other words, 
\begin{equation*} 
\NA_{\pi} (X \pten Y) = X \pten Y.
\end{equation*} 
\end{theorem}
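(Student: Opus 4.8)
The plan is to reduce the computation of the projective norm to a minimization over a distinguished family of representations adapted to the polyhedral structure of $B_X$, and then to attain that minimum via a weak$^*$-compactness argument exploiting that $Y = W^*$ is a dual space. First I would record that $X$ is finite dimensional: the set $B_X = \co(\{x_1,\dots,x_n\})$ is compact and, being the unit ball, has nonempty interior, so $X$ is finite dimensional by Riesz's theorem. In particular $X \pten Y = X \otimes Y$ algebraically (the algebraic tensor product is already complete), so every $z$ is a finite sum; and since $\{x_1,\dots,x_n\}$ spans $X$, every $z$ admits a representation $z = \sum_{i=1}^n x_i \otimes y_i$ with $y_i \in Y$.

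Next I would establish the norm formula
\begin{equation*}
\|z\|_{\pi} = \inf\left\{ \sum_{i=1}^n \|y_i\| : z = \sum_{i=1}^n x_i \otimes y_i,\ y_i \in Y \right\}.
\end{equation*}
The inequality ``$\leq$'' is immediate since $\|x_i\| = 1$. For ``$\geq$'', given any representation $z = \sum_k u_k \otimes v_k$, I would write $u_k = \|u_k\| \hat u_k$ with $\hat u_k \in B_X = \co(\{x_i\})$, expand $\hat u_k = \sum_i \mu_{ki} x_i$ as a convex combination, and set $y_i := \sum_k \|u_k\| \mu_{ki} v_k$; then $z = \sum_i x_i \otimes y_i$ and $\sum_i \|y_i\| \leq \sum_k \|u_k\| \|v_k\|$, so passing to the infimum yields the claim. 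This is precisely the step where the hypothesis $B_X = \co(\{x_1,\dots,x_n\})$ is essential.

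Finally I would attain this infimum. Let $R_z := \{(y_1,\dots,y_n) \in Y^n : \sum_i x_i \otimes y_i = z\}$. Fixing a basis of $X$, the coordinates of $\sum_i x_i \otimes y_i$ are finite linear combinations of $y_1,\dots,y_n$, hence $w^*$-continuous functions of $(y_i)$ into $Y = W^*$; therefore $R_z$ is a nonempty $w^*$-closed affine subset of $Y^n$. Since the dual norm is $w^*$-lower semicontinuous, $f(y) := \sum_i \|y_i\|$ is $w^*$-lsc, so $K := R_z \cap \{f \leq \|z\|_{\pi} + 1\}$ is $w^*$-closed and bounded, hence $w^*$-compact by Banach--Alaoglu, and it is nonempty by the norm formula. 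Thus $f$ attains its minimum on $K$ at some $(y_1,\dots,y_n)$, and this minimum equals $\inf_{R_z} f = \|z\|_{\pi}$. Writing $y_i = \|y_i\| v_i$ with $v_i \in B_Y$ gives $z = \sum_i \|y_i\|(x_i \otimes v_i)$ with $\sum_i \|y_i\| = \|z\|_{\pi}$, so $z \in \NA_{\pi}(X \pten Y)$.

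I expect the delicate points to be the two facts underpinning the compactness step: that $R_z$ is $w^*$-closed, which crucially uses the finite dimensionality of $X$ (so that the coordinate maps are finite, hence $w^*$-continuous, linear combinations), and that the objective $\sum_i \|y_i\|$ is $w^*$-lsc, which uses that $Y$ is a dual space. The reduction in the norm formula, by contrast, is the conceptual heart of the argument but is elementary once the convex-combination expansion of the $\hat u_k$ is set up.
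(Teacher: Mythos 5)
Your proof is correct, and it takes a genuinely different route from the paper's. You reduce the projective norm to the explicit formula $\|z\|_\pi=\inf\{\sum_{i=1}^n\|y_i\| : z=\sum_{i=1}^n x_i\otimes y_i\}$ and then attain this infimum by minimizing the $w^*$-lower semicontinuous functional $(y_i)\mapsto\sum_i\|y_i\|$ over the $w^*$-closed affine set of admissible representations intersected with a ball, which is $w^*$-compact by Banach--Alaoglu in $Y^n$. The paper instead argues at the level of the unit ball: writing $Y=Z^*$ and using the finite dimensionality of $X$ to identify $X\pten Z^*$ with $\mathcal{L}(X,Z)^*=(X^*\iten Z)^*$, it shows each $x_i\otimes B_{Z^*}$ is $w^*$-compact and convex, so that $\co\bigl(\bigcup_{i=1}^n x_i\otimes B_{Z^*}\bigr)$ is already $w^*$-compact, hence norm-closed, and therefore equals $B_{X\pten Y}$; every norm-one tensor is then a finite convex combination of basic tensors $x_i\otimes z^*$ and trivially attains its norm. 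The two arguments rest on the same two ingredients --- the convex-combination expansion over the vertices $x_i$ (your norm formula is exactly the content of the paper's inclusion $B_X\otimes B_{Z^*}\subseteq\co(\bigcup_i x_i\otimes B_{Z^*})$) and $w^*$-compactness coming from $Y$ being a dual --- but your variational argument bypasses the duality identification entirely and buys the norm formula as a by-product, while the paper's version buys the cleaner structural statement that the convex hull needs no closure. Two small points of housekeeping in yours: discard the terms with $u_k=0$ before normalizing, and observe that the same expansion applies verbatim to infinite representations $\sum_k u_k\otimes v_k$ with $\sum_k\|u_k\|\|v_k\|<\infty$, so the formula computes the projective norm as defined on the completion and the attained representation is admissible in the sense of the paper's definition of $\NA_\pi(X\pten Y)$.
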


\begin{proof} Let us assume that $Y = Z^*$. Notice first that since $X$ is finite dimensional, we have that $\mathcal{L}(X, Z) = \mathcal{K}(X, Z) = X^* \iten Z$ and then $\mathcal{L}(X, Z)^* = X \pten Z^*$. We will use this fact to prove the following statement. 
	
	\vspace{0.2cm}	
	\noindent	
	{\it Claim}: The set $\{x_i\} \otimes B_{Z^*}$ is a $w^*$-compact convex subset of $X \pten Z^*$. 
	\vspace{0.2cm}

	Indeed, for each $i=1,\ldots,n$, let us take  $T_i\colon \mathcal{L}(X, Z) \longrightarrow Z$ to be defined by $T_i(T) := T(x_i)$ for every $T \in \mathcal{L}(X, Z)$. Therefore, its adjoint operator $T_i^*\colon Z^* \longrightarrow \mathcal{L}(X, Z)^* = X \pten Z^*$ satisfies $T_i^*(z^*) = x_i \otimes z^*$ for every $z^* \in Z^*$ and $i=1, \ldots, n$. This implies that $T_i^*(B_{Z^*}) = x_i \otimes B_{Z^*}$ and since $T^*$ is $w^*$-$w^*$ continuous, we can conclude that $\{x_i\} \otimes B_{Z^*}$ is $w^*$-compact convex in $X \pten Z^*$. 
	
	\vspace{0.2cm}	
	Thus, $A := \co{ \left( \bigcup_{i=1}^n x_i \otimes B_{Z^*} \right)}$ is $w^*$-compact as being the convex hull of a finite number of $w^*$-compact convex sets (see, for instance, \cite[Lemma~5.29]{AB}). So, in particular, $A$ is $w^*$-closed and then norm-closed. Finally, if $z \in B_X \otimes B_{Z^*}$, then there are $\lambda_i>0$ with $\sum_{i=1}^n \lambda_i =1$ such that
	\begin{equation*}
	z = \left( \sum_{i=1}^n \lambda_i x_i \right) \otimes y = \sum_{i=1}^n \lambda_i x_i \otimes y.
	\end{equation*}
Therefore, $B_X \otimes B_{Z^*} \subseteq A$ and by taking convex hulls, we get $A = B_{X \pten Z^*} = B_{X \pten Y}$. In particular, every element of $S_{X\pten Z^*}$ can be written as a finite convex combination of basic tensors in $B_X\otimes B_{Z^*}$, so it is norm attaining. 
\end{proof}

Recall a Banach space $X$ is said to be polyhedral if the unit ball of every finite-dimensional subspace is a polytope, that is, the convex hull of a finite set. We can use Theorem~\ref{theorem-polyhedral-dual} to get the following denseness result.

\begin{theorem} \label{theorem-polyhedral} Let $X$ be a Banach which is polyhedral and satisfies the metric $\pi$-property. Assume that $Y$ is a dual space. Then, every tensor in $X \pten Y$ can be approximated by tensors that attain their norms. In other words,
\begin{equation*} 
\overline{\NA_{\pi} (X \pten Y)}^{\|\cdot\|_{\pi}} = X \pten Y. 
\end{equation*} 
\end{theorem}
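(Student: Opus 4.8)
The plan is to mirror the proof of Theorem~\ref{propertyP}, replacing the use of Proposition~\ref{finite-dimensional} by Theorem~\ref{theorem-polyhedral-dual}. Fix $z \in X \pten Y$ and $\eps > 0$. Choosing a near-optimal representation and normalizing the first factors, I would first write $z = \sum_{n=1}^\infty x_n \otimes y_n$ with $x_n \in S_X$ and $\sum_{n=1}^\infty \|y_n\| < \|z\|_\pi + \eps$. Since this series converges absolutely, I can truncate: pick $k \in \N$ so large that $z_0 := \sum_{n=1}^k x_n \otimes y_n$ satisfies $\|z - z_0\|_\pi < \eps/2$, which reduces the problem to approximating the finitely supported tensor $z_0$.

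Next I would invoke the metric $\pi$-property applied to $\{x_1, \ldots, x_k\} \subseteq S_X$: for a parameter $\delta > 0$ to be fixed below, this produces a finite dimensional $1$-complemented subspace $M \subseteq X$ together with vectors $x_n' \in M$ satisfying $\|x_n - x_n'\| < \delta$ for $n = 1, \ldots, k$. Here is where polyhedrality enters: because $X$ is polyhedral and $M$ is finite dimensional, $B_M$ is a polytope, that is, $B_M = \co(\ext{B_M})$ where $\ext{B_M}$ is a finite subset of $S_M$. Thus $M$ satisfies the hypothesis of Theorem~\ref{theorem-polyhedral-dual}, and since $Y$ is a dual space, every element of $M \pten Y$ attains its projective norm. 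In particular the tensor $z' := \sum_{n=1}^k x_n' \otimes y_n \in M \pten Y$ attains its norm in $M \pten Y$.

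It remains to transfer the norm-attainment from $M \pten Y$ to $X \pten Y$ and to estimate the error. Let $P \colon X \to M$ be a norm-one projection witnessing that $M$ is $1$-complemented. The inclusion $M \pten Y \hookrightarrow X \pten Y$ is a contraction, while $P \otimes \id_Y$ is a norm-one map $X \pten Y \to M \pten Y$ acting as the identity on $M \pten Y$; composing the two shows that $M \pten Y$ sits isometrically inside $X \pten Y$. Consequently $\|z'\|_{M \pten Y} = \|z'\|_{X \pten Y}$, and any finite representation of $z'$ attaining the norm in $M \pten Y$ attains it in $X \pten Y$ as well, so $z' \in \NA_\pi(X \pten Y)$. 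Finally, $z' - z_0 = \sum_{n=1}^k (x_n' - x_n) \otimes y_n$ gives $\|z' - z_0\|_\pi \le \delta \sum_{n=1}^k \|y_n\|$, so choosing $\delta$ with $\delta \sum_{n=1}^k \|y_n\| < \eps/2$ yields $\|z' - z\|_\pi < \eps$.

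I expect the main obstacle to be the isometric-embedding step: one must check carefully that $1$-complementedness of $M$ forces the projective norm on $M \pten Y$ to agree with the ambient norm, since the inclusion of a subspace into a projective tensor product need not be isometric in general, and that this isometry genuinely preserves the witnessing representation so that norm-attainment survives the passage to $X \pten Y$. The role of polyhedrality is comparatively clean, merely ensuring that each finite dimensional $M$ produced by the metric $\pi$-property falls under the scope of Theorem~\ref{theorem-polyhedral-dual}.
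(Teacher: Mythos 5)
Your proposal is correct and follows essentially the same route as the paper's proof: truncate a near-optimal representation, use the metric $\pi$-property to push the first factors into a finite-dimensional $1$-complemented subspace $M$, invoke polyhedrality so that $B_M$ is a polytope and Theorem~\ref{theorem-polyhedral-dual} applies to $M \pten Y$, and transfer norm-attainment back via the isometry $\|\cdot\|_{M\pten Y}=\|\cdot\|_{X\pten Y}$ coming from the norm-one projection. Your explicit justification of that isometry (composing the contractive inclusion with $P\otimes\id_Y$) is exactly the point the paper uses implicitly, so there is nothing to add.
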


\begin{proof} Let $u \in S_{X \pten Y}$ and $\eps \in (0, 1)$ be given. Then, there exist sequences $(\lambda_n) \subseteq \R^+$, $(x_n) \subseteq S_X$, and $(y_n) \subseteq S_Y$ with $u = \sum_{n=1}^{\infty} \lambda_n x_n \otimes y_n$ and $\sum_{n=1}^{\infty} \lambda_n < 1 + \eps$. We may find $k \in \N$ so that $\|u - z\| < \frac{\eps}{2}$, where $z := \sum_{n=1}^k \lambda_n x_n \otimes y_n$. Since $X$ satisfies the metric $\pi$-property, we can find a finite-dimensional subspace $M$ of $X$ which is 1-complemented and such that for every $n \in \{1, \ldots, k\}$, there exists $x_n' \in M$ such that $\|x_n - x_n'\| < \frac{\eps}{4}$. Define $z' := \sum_{n=1}^k \lambda_n x_n' \otimes y_n$ and notice that
	\begin{eqnarray*}
		\|z' - z\| = \left\| \sum_{n=1}^{k} \lambda_n x_n' \otimes y_n - \sum_{n=1}^k \lambda_n x_n \otimes y_n \right\| &=& \left\| \sum_{n=1}^k \lambda_n (x_n' - x_n) \otimes y_n \right\| \\
		&\leq& \sum_{n=1}^{\infty} \lambda_n \|x_n' - x_n\| \|y_n\| \\
		&<& \frac{\eps}{4} \sum_{n=1}^k \lambda_n \\
		&<& \frac{\eps}{2}. 
	\end{eqnarray*}
	Notice now that $z' \in M \pten Y$ and since $M$ is 1-complemented in $X$, we have that $\|z'\|_{M \pten Y} = \|z'\|_{X \pten Y}$. Moreover, since $X$ is a polyhedral, we have that $B_M$ is equal to $\co{\{x_1, \ldots, x_m\}}$ for some $x_1, \ldots, x_m \in S_M$. Theorem~\ref{theorem-polyhedral-dual} shows then that $z' \in \NA_{\pi} (M \pten Y)$ and, therefore, $z' \in \NA_{\pi}(X \pten Y)$. Finally, $\|z' - u\| \leq \|z' - z\| + \|z - u\| < \eps$.	
\end{proof}

The Banach space $c_0$ endowed with $\|\cdot\|_{\infty}$ is the canonical example of a polyhedral space. So, we have the following immediate consequence of Theorem~\ref{theorem-polyhedral}, which is not covered by \cite[Theorem~4.8]{DJRR}.

\begin{corollary} Let $Y$ be a dual space. Then, $\NA_{\pi}(c_0 \pten Y)$ is dense in $c_0 \pten Y$.
\end{corollary}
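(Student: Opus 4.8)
The plan is simply to check that $c_0$ meets the two hypotheses of Theorem~\ref{theorem-polyhedral}—polyhedrality and the metric $\pi$-property—and then to apply that theorem with $X = c_0$; since $Y$ is assumed to be a dual space, nothing further is needed, and the corollary follows at once.

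First, I would recall that $c_0$ is polyhedral. As noted immediately before the statement, $c_0$ equipped with the supremum norm is the canonical polyhedral space: the unit ball of every finite-dimensional subspace is a polytope. This is a classical fact (the only slightly nonelementary point in the whole argument), so I would cite it rather than reprove it.

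Second, I would verify the metric $\pi$-property. The unit vector basis $(e_n)_{n=1}^{\infty}$ of $c_0$ is a monotone Schauder basis, so the canonical truncation projections $P_k \colon c_0 \to \spann\{e_1,\ldots,e_k\}$ all have norm one. Given $\eps > 0$ and a finite set $\{x_1,\ldots,x_m\} \subseteq S_{c_0}$, each coordinate sequence converges to $0$, so for $k$ large enough one has $\|x_i - P_k x_i\| < \eps$ for all $i$. Taking $M := \spann\{e_1,\ldots,e_k\}$, which is finite-dimensional and $1$-complemented via $P_k$, and setting $x_i' := P_k x_i \in M$, one obtains exactly the approximation required by the definition of the metric $\pi$-property. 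Equivalently, $c_0$ has a finite-dimensional decomposition with decomposition constant $1$, which the excerpt already records as sufficient for this property.

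With both hypotheses in place, Theorem~\ref{theorem-polyhedral} yields $\overline{\NA_{\pi}(c_0 \pten Y)}^{\|\cdot\|_{\pi}} = c_0 \pten Y$, which is precisely the claim. I expect no genuine obstacle here: the argument is a direct specialization, and the only step demanding any outside input is the polyhedrality of $c_0$, which is a well-known classical result and is invoked rather than established.
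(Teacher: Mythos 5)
Your proposal is correct and follows exactly the route the paper intends: the paper presents this as an immediate consequence of Theorem~\ref{theorem-polyhedral}, citing the polyhedrality of $c_0$ and its metric $\pi$-property (via the monotone unit vector basis). Your explicit verification of the metric $\pi$-property using the truncation projections $P_k$ is a correct elaboration of the same argument.
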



\begin{remark} Notice that in Theorem~\ref{theorem-polyhedral} the hypothesis of $X$ having the metric $\pi$-property is essential. Indeed, in \cite[Section~5]{DJRR} the authors show that if $X$ is a closed subspace of $c_0$ (and hence polyhedral since this property is hereditary) failing the approximation property and $Y:= (X, \nn{\cdot})$ is a renorming of $X$, where $\nn{\cdot}$ is the norm that defines Read's space, then $\NA_{\pi} (X \pten Y^*)$ is not dense in $X \pten Y^*$.
\end{remark}

Next, we can prove the following result on the denseness of nuclear operators which attain their nuclear norms under the RNP assumption. 

\begin{theorem}\label{th:RNPnuclear} Let $X, Y$ be Banach spaces such that $X^*$ and $Y^*$ have the RNP. Then, every nuclear operator from $X$ into $Y^*$ can be approximated by norm-attaining nuclear operators. In other words, 
\begin{equation*} 
\overline{\NA_{\mathcal N}(X, Y^*)}^{\|\cdot\|_{\mathcal{N}}} = \mathcal{N}(X, Y^*).
\end{equation*} 
\end{theorem}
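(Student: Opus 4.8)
The plan is to reduce the statement to the symmetric-tensor machinery developed earlier, or rather to a nuclear-operator analogue of it. Since $X^*$ has the RNP, I would like to mimic the structure of the proof of Theorem~\ref{RNP+AP=symmetric}: first establish that $\mathcal{N}(X,Y^*) = X^* \pten Y^*$ carries the RNP, then describe its extreme points, and finally invoke a characterization of norm-attainment together with the density Lemma~\ref{lemma:RNP}. The identification $\mathcal{N}(X,Y^*) = X^* \pten Y^*$ is the one point that needs care, since it requires an approximation-property hypothesis that is \emph{not} assumed here; so the first thing I would check is whether the argument can be run directly on $X^* \pten Y^*$ and only afterwards transported to $\mathcal{N}(X,Y^*)$, or whether the RNP of $X^*$ (equivalently, every operator into $X^{**}$ being representable) suffices to identify the two spaces in the relevant direction.

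The key steps, in order, are as follows. First I would verify that $X^* \pten Y^*$ has the RNP: by \cite[Theorem~VIII.4.7]{DU}, if $X^*$ and $Y^*$ both have the RNP and one of them has the AP, then $X^* \pten Y^*$ has the RNP; so this is the natural place where an AP assumption would enter, and I would have to see whether the theorem as stated secretly needs it or whether the RNP of both duals alone is enough via a different route (for instance, the fact that $\mathcal{N}(X,Y^*)$ is a dual space under RNP). Second, I would identify the extreme points of $B_{X^* \pten Y^*}$: the analogue of \cite[Proposition~1]{BR} for projective (non-symmetric) tensor products states that $\ext{B_{X^* \pten Y^*}} \subseteq \{x^* \otimes y^* : x^* \in S_{X^*},\, y^* \in S_{Y^*}\}$, so every extreme point is an elementary tensor. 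Third, I would prove a nuclear-operator analogue of Lemma~\ref{characterization}, namely that $z = \sum_n x_n^* \otimes y_n^*$ attains its projective norm if and only if there is a norm-one functional in $(X^* \pten Y^*)^*$ that evaluates to $\sign$ of the coefficients on each $x_n^* \otimes y_n^*$; this is precisely the content of \cite[Theorem~3.1]{DJRR}, which I would cite rather than reprove.

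With these in place the endgame is exactly the RNP argument of Lemma~\ref{th:RNPsym}: applying Lemma~\ref{lemma:RNP} to the Banach space $X^* \pten Y^*$ produces a dense set $A$ of elements of the form $\sum_{i=1}^n \lambda_i e_i$ with $e_i \in \ext{B_{X^* \pten Y^*}}$, $\lambda_i > 0$, and $\|{\sum_i \lambda_i e_i}\| = \sum_i \lambda_i$; by the extreme-point description each $e_i$ is an elementary tensor $x_i^* \otimes y_i^*$, and the norm equality says precisely that such a tensor attains its projective norm, hence lies in $\NA_{\mathcal{N}}(X,Y^*)$. Thus $A \subseteq \NA_{\mathcal{N}}(X,Y^*)$ is dense, which is the claim. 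The step I expect to be the main obstacle is the very first one: reconciling the identification $\mathcal{N}(X,Y^*) = X^* \pten Y^*$ and the RNP of this space with the hypotheses actually listed (RNP of both duals, with no AP), since \cite[Theorem~VIII.4.7]{DU} nominally wants an approximation property and the surjectivity of $J\colon X^* \pten Y^* \to \mathcal{N}(X,Y^*)$ likewise typically invokes AP. I would resolve this by using that the RNP of $X^*$ already forces every operator in $\mathcal{N}(X,Y^*)$ to be representable as a genuinely summable series, so that the projective-tensor norm and the nuclear norm coincide and the two density statements are the same; if that identification needs the extra AP hypothesis, I would flag it, since it is the one genuinely delicate ingredient and the rest of the proof is a faithful transcription of the symmetric-tensor case.
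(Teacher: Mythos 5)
Your endgame is exactly the paper's: apply Lemma~\ref{lemma:RNP} to the relevant space, use that its extreme points are elementary tensors, and note that the resulting dense set of finite sums $\sum_i\lambda_i x_i^*\otimes y_i^*$ with $\norm{T}=\sum_i\lambda_i$ is visibly contained in $\NA_{\mathcal N}(X,Y^*)$. But the obstacle you flag at the outset is a real one, and your proposed resolution does not work: the RNP of $X^*$ does not make $J\colon X^*\pten Y^*\to\mathcal{N}(X,Y^*)$ an isometry (that is precisely what the AP buys, and it is why the paper only derives the projective-tensor statement as Corollary~\ref{cor:RNPtensor} under an extra AP hypothesis). Your second step is also not available as stated: there is no general theorem asserting $\ext{B_{X^*\pten Y^*}}\subseteq S_{X^*}\otimes S_{Y^*}$ for the projective tensor product of arbitrary Banach spaces; the elementary-tensor description of extreme points is a theorem about \emph{duals of injective tensor products} (Ruess--Stegall \cite{RS}), not about projective tensor products.

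The fix, which is what the paper does, is to never touch $X^*\pten Y^*$ at all: run the argument on $\mathcal{N}(X,Y^*)$ itself, identified with $(X\iten Y)^*$. The proof of \cite[Theorem~VIII.4.7]{DU} yields that $(X\iten Y)^*=\mathcal{N}(X,Y^*)$ has the RNP whenever $X^*$ and $Y^*$ do --- the AP enters that theorem only for the further identification with $X^*\pten Y^*$, which is irrelevant here since the statement concerns the nuclear norm. Then $\ext{B_{(X\iten Y)^*}}\subseteq S_{X^*}\otimes S_{Y^*}$ by \cite{RS}, and Lemma~\ref{lemma:RNP} applied to this space finishes the proof. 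You also do not need a nuclear analogue of Lemma~\ref{characterization} for the last step: for a finite sum of elementary tensors, the equality $\norm{T}=\sum_i\lambda_i$ with $\lambda_i>0$ and $x_i^*,y_i^*$ of norm one is already, by definition, nuclear-norm attainment. So the architecture of your proof is right, but the two ingredients you lean on in the first two steps are exactly the ones that fail without AP; the correct replacements are $(X\iten Y)^*$ and the Ruess--Stegall theorem.
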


\begin{proof} Suppose that $X^*$ and $Y^*$ have the RNP. Then, $\mathcal N(X, Y^*)=(X\iten Y)^*$ also has the RNP (this is shown in \cite[Theorem~VIII.4.7, pg. 249]{DU} under the additional assumption that $X^*$ or $Y^*$ have the AP, which is only used to get $\mathcal N(X, Y^*)=X^*\pten Y^*$). Also, $\ext{B_{\mathcal N(X,Y^*)}}=\ext{B_{(X\iten Y)^*}}\subseteq S_{X^*}\otimes S_{Y^*}$ (cf. \cite{RS}). By Lemma~\ref{lemma:RNP}, the set 
	\[ A= \left\{T=\sum_{i=1}^n\lambda_i x_i^*\otimes y_i^*: \lambda_i>0, x_i^*\in S_{X^*}, y_i^*\in S_{Y^*} \text{ for } i=1, \ldots, n,   \norm{T}=\sum_{i=1}^n \lambda_i \right\}\]
is dense in $\mathcal N(X, Y^*)$. Clearly, $A \subseteq  \NA_{\mathcal{N}}(X, Y^*)$. 
\end{proof}

If we are under the hypotheses of Theorem~\ref{th:RNPnuclear} together with the extra assumption that one of the spaces has the approximation property, then the equality $X^*\pten Y^*=\mathcal N(X,Y^*)$ holds (see, for instance, \cite[Corollary~4.8]{R}). By using Theorem~\ref{th:RNPnuclear}, we get the following counterpart of Theorem~\ref{RNP+AP=symmetric} for non-symmetric tensors, which provides a positive answer for \cite[Question~6.1]{DJRR} in the case that one of the spaces has the approximation property.

\begin{corollary}\label{cor:RNPtensor} Let $X, Y$ be Banach spaces such that $X^*$ and $Y^*$ have the RNP and at least one of them has the AP. Then, every tensor in $X^* \pten Y^*$ can be approximated by tensors that attain their norms. In other words, 
\begin{equation*} 	
\overline{\NA_\pi(X^*\pten Y^*)}^{\|\cdot\|_{\pi}} = X^*\pten Y^*.
\end{equation*} 
\end{corollary}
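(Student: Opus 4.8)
The plan is to derive Corollary~\ref{cor:RNPtensor} directly from Theorem~\ref{th:RNPnuclear} by exploiting the fact that, under the stated hypotheses, the projective tensor product $X^* \pten Y^*$ and the space of nuclear operators $\mathcal{N}(X, Y^*)$ coincide \emph{isometrically}. First I would recall that, since $X^*$ (or $Y^*$) has the approximation property, the canonical operator $J \colon X^* \pten Y^* \longrightarrow \mathcal{L}(X, Y^*)$ is an isometric isomorphism onto its image $\mathcal{N}(X, Y^*)$; this is exactly the content of the cited \cite[Corollary~4.8]{R}, already invoked in the paragraph preceding the statement. Thus the two norms $\|\cdot\|_\pi$ and $\|\cdot\|_{\mathcal{N}}$ agree under this identification.

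The heart of the argument is that the identification $J$ respects the two notions of norm-attainment, so that $J(\NA_\pi(X^* \pten Y^*)) = \NA_{\mathcal{N}}(X, Y^*)$. I would verify this by unwinding the definitions: a tensor $z = \sum_{n} x_n^* \otimes y_n^*$ attains its projective norm precisely when there is such a representation with $\|z\|_\pi = \sum_n \|x_n^*\|\|y_n^*\|$, and since $J$ sends this representation to the nuclear representation $\sum_n x_n^* \otimes y_n^*$ of the operator $J(z)$ with the matching sum, the operator $J(z)$ attains its nuclear norm, and conversely. Because $J$ is an isometry, the equality of norms guarantees that an optimal representation on one side transfers to an optimal representation on the other.

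With these two observations in hand, the corollary follows immediately: applying Theorem~\ref{th:RNPnuclear} (whose hypotheses, namely that $X^*$ and $Y^*$ both have the RNP, are part of our assumptions) gives that $\NA_{\mathcal{N}}(X, Y^*)$ is norm-dense in $\mathcal{N}(X, Y^*)$. Transporting this through the isometric isomorphism $J^{-1}$, the density is preserved, so $\NA_\pi(X^* \pten Y^*)$ is dense in $X^* \pten Y^*$, which is the desired conclusion.

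I do not expect any serious obstacle here; the result is essentially a transport of Theorem~\ref{th:RNPnuclear} along an isometry, and the role of the approximation property is solely to upgrade the contractive map $J$ to a genuine isometric identification $X^* \pten Y^* = \mathcal{N}(X, Y^*)$. The only point requiring a line of care is the bookkeeping that shows $J$ matches norm-attaining elements with norm-attaining elements, but this is routine once the isometry is established, since both notions are defined through the existence of a representation realizing the respective infimum.
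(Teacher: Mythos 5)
Your proposal is correct and follows essentially the same route as the paper: the authors likewise invoke the isometric identification $X^*\pten Y^*=\mathcal N(X,Y^*)$, valid since one of $X^*$, $Y^*$ has the AP, and then transport the density statement of Theorem~\ref{th:RNPnuclear} through it. Your extra bookkeeping that the identification matches norm-attaining tensors with norm-attaining nuclear operators is implicit in the paper (the dense set produced there consists of finite sums $\sum_i\lambda_i x_i^*\otimes y_i^*$ realizing the norm, which are norm-attaining in either sense), so nothing is missing.
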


Even if we weaken the assumption in Theorem~\ref{th:RNPnuclear} so that only $X^*$ has the RNP, we are still able to obtain the denseness result, but in the $w^*$-topology, of the set of norm-attaining nuclear operators as in Theorem~\ref{thm:no_ell_1}. Notice that $\mathcal{N}(X,Y^*)$ is identified with $(X \iten Y)^*$, the dual of injective tensor space, under the assumption that $X^*$ has the RNP. Arguing in the same way as in the proof of Theorem~\ref{thm:no_ell_1} but using the fact that $\ext{B_{(X\iten Y)^*}}\subseteq S_{X^*}\otimes S_{Y^*}$ and \cite[Theorem~3.2]{DJRR} instead of \cite[Proposition~1]{BR} and Lemma~\ref{characterization}, respectively, the following result can be obtained.

\begin{theorem} \label{nuclear-weak-star} Let $X$ be a Banach space such that $X^*$ has the RNP. Then, the set $\NA_{\mathcal{N}} (X, Y^*)$ is $w^*$-dense in $\mathcal{N} (X, Y^*) = (X \iten Y)^*$ for any Banach space $Y$.  
\end{theorem}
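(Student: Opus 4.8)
The plan is to follow the explicit roadmap laid out in the paragraph immediately preceding the statement, which instructs us to mimic the proof of Theorem~\ref{thm:no_ell_1} but with two substitutions. The first observation is that, under the assumption that $X^*$ has the RNP, we have the isometric identification $\mathcal{N}(X,Y^*) = (X \iten Y)^*$; this replaces the role that $(\siten X)^*$ played in Theorem~\ref{thm:no_ell_1}. So I would begin by fixing $T \in S_{\mathcal{N}(X,Y^*)}$ and an $\eps > 0$, and by the Bishop-Phelps theorem \cite{BP} produce $T_0 \in S_{(X \iten Y)^*}$ with $\|T_0 - T\| < \eps$ that attains its norm at some $u_0 \in S_{X \iten Y}$.

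Next I would form the $w^*$-compact convex face $C := \{ S \in B_{(X \iten Y)^*} : \langle S, u_0 \rangle = 1 \}$, exactly as in the proof of Theorem~\ref{thm:no_ell_1}. The Krein-Milman theorem gives $C = \overline{\co}^{w^*}(\ext{C})$, and since $C$ is a face of $B_{(X \iten Y)^*}$ we have $\ext{C} \subseteq \ext{B_{(X \iten Y)^*}}$. Here comes the first substitution: instead of invoking \cite[Proposition~1]{BR} to locate the extreme points among $\{\pm (x^*)^N\}$, I would use the extreme-point description $\ext{B_{(X \iten Y)^*}} \subseteq S_{X^*} \otimes S_{Y^*}$ (cf.\ \cite{RS}), which is precisely the fact already used in the proof of Theorem~\ref{th:RNPnuclear}. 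This yields
\begin{equation*}
C \subseteq \overline{\co}^{w^*}\Big( \big\{ x^* \otimes y^* : x^* \in S_{X^*},\, y^* \in S_{Y^*},\, \langle x^* \otimes y^*, u_0 \rangle = 1 \big\} \Big).
\end{equation*}

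The second substitution is to replace the application of Lemma~\ref{characterization} by the analogous characterization of norm-attaining nuclear operators from \cite[Theorem~3.2]{DJRR}: an element of $C$ that is a (finite convex, hence limit of) combination of basic tensors $x^* \otimes y^*$ all satisfying $\langle x^* \otimes y^*, u_0 \rangle = 1$ will, by that characterization, attain its nuclear norm. Since $T_0 \in C$ (because $\langle T_0, u_0\rangle = \|T_0\| = 1$) and $C$ lies in the $w^*$-closed convex hull of such norm-attaining operators, $T_0$ can be $w^*$-approximated by norm-attaining elements of $\mathcal{N}(X,Y^*)$, and hence so can $T$. The expected main obstacle is purely bookkeeping: I must ensure that the elements of the approximating net $w^*$-converging to $T_0$ are genuinely in $\NA_{\mathcal{N}}(X,Y^*)$, which requires that \cite[Theorem~3.2]{DJRR} indeed certifies norm-attainment from the condition $\langle x_i^* \otimes y_i^*, u_0\rangle = 1$ on each summand of a representation, mirroring the role Lemma~\ref{characterization} played in the symmetric case. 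Beyond that, the argument is a direct transcription, and the only genuinely new input—the extreme-point inclusion into $S_{X^*} \otimes S_{Y^*}$ and the identification $\mathcal{N}(X,Y^*) = (X \iten Y)^*$ under the RNP of $X^*$—has already been recorded earlier in the section.
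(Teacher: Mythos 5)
Your proposal is correct and is essentially identical to the paper's argument: the paper itself only sketches this proof by instructing the reader to repeat the proof of Theorem~\ref{thm:no_ell_1} with $\ext{B_{(X\iten Y)^*}}\subseteq S_{X^*}\otimes S_{Y^*}$ replacing \cite[Proposition~1]{BR} and \cite[Theorem~3.2]{DJRR} replacing Lemma~\ref{characterization}, which is exactly what you carry out. The one point you flag --- that the characterization in \cite[Theorem~3.2]{DJRR} certifies norm-attainment of a combination $\sum_i\lambda_i x_i^*\otimes y_i^*$ from the condition $\langle x_i^*\otimes y_i^*,u_0\rangle=1$ on each summand --- is indeed the intended use of that result, mirroring how Lemma~\ref{characterization}(2) is used in the symmetric case.
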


Using the equality $X^*\pten Y^*=\mathcal N(X,Y^*)$ provided that one of $X^*$ or $Y^*$ has the AP, we get the following immediate consequence of Theorem~\ref{nuclear-weak-star}.

\begin{corollary} Let $X , Y$ be Banach spaces such that $X^*$ has the RNP and at least one of $X^*$ or $Y^*$ has the AP. Then, the set $\NA_{\pi} (X^* \pten Y^*)$ is $w^*$-dense in $X^* \pten Y^*=(X\iten Y)^*$. 
\end{corollary}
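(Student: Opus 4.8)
The plan is to derive this final corollary as an immediate consequence of Theorem~\ref{nuclear-weak-star} combined with the standard identification between projective tensor products and nuclear operators. The key observation is that under the hypothesis that at least one of $X^*$ or $Y^*$ has the AP, the canonical map $X^* \pten Y^* \longrightarrow \mathcal{N}(X, Y^*)$ is an isometric isomorphism, so that the two norm-attainment notions---attaining the projective norm in $X^* \pten Y^*$ and attaining the nuclear norm in $\mathcal{N}(X, Y^*)$---coincide. Once this identification is in place, the statement becomes a verbatim restatement of Theorem~\ref{nuclear-weak-star}.

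First I would recall the equality $X^* \pten Y^* = \mathcal{N}(X, Y^*)$, which holds whenever $X^*$ or $Y^*$ enjoys the approximation property (this is precisely the fact stated in the preliminaries just after the definition of the nuclear norm, and also invoked in Corollary~\ref{cor:RNPtensor}). Under this identification, a tensor $z = \sum_{n} x_n^* \otimes y_n^* \in X^* \pten Y^*$ corresponds to the nuclear operator $L_z \in \mathcal{N}(X, Y^*)$, and the representations realizing the projective norm of $z$ are exactly the representations realizing the nuclear norm of $L_z$. Hence $z \in \NA_\pi(X^* \pten Y^*)$ if and only if $L_z \in \NA_{\mathcal{N}}(X, Y^*)$; that is, the isometry carries $\NA_\pi(X^* \pten Y^*)$ onto $\NA_{\mathcal{N}}(X, Y^*)$.

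Second, I would apply Theorem~\ref{nuclear-weak-star}, whose hypothesis (that $X^*$ has the RNP) is included in the hypothesis of the present corollary. That theorem gives that $\NA_{\mathcal{N}}(X, Y^*)$ is $w^*$-dense in $\mathcal{N}(X, Y^*) = (X \iten Y)^*$. Transporting this through the isometric isomorphism above, and noting that the isomorphism is also a $w^*$-$w^*$ homeomorphism since it is the adjoint identification $(X \iten Y)^* = X^* \pten Y^*$, we conclude that $\NA_\pi(X^* \pten Y^*)$ is $w^*$-dense in $X^* \pten Y^* = (X \iten Y)^*$, which is exactly the assertion.

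I do not expect any genuine obstacle here, as the corollary is a direct translation of Theorem~\ref{nuclear-weak-star} under a known identification. The only point requiring a modicum of care is to confirm that the AP hypothesis is used only to secure the isometric equality $X^* \pten Y^* = \mathcal{N}(X, Y^*)$ and the attendant $w^*$-topological compatibility, while the RNP hypothesis on $X^*$ is exactly what feeds Theorem~\ref{nuclear-weak-star}; once these two ingredients are lined up, the proof is a one-line deduction.
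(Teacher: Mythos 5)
Your proposal is correct and follows exactly the paper's route: the corollary is stated as an immediate consequence of Theorem~\ref{nuclear-weak-star} via the identification $X^* \pten Y^* = \mathcal{N}(X, Y^*)$, which holds when one of $X^*$ or $Y^*$ has the AP. Your additional remarks on the correspondence of the two norm-attainment notions and the $w^*$-compatibility of the identification are exactly the (implicit) content of the paper's one-line deduction.
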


\noindent
\textbf{Acknowledgements:} The authors would like to thank Daniel Carando and Jorge Tom\'as Rodr\'iguez for fruitful conversations on the topic of the paper. The first author was supported by the project OPVVV CAAS CZ.02.1.01/0.0/0.0/16\_019/0000778 and by the Estonian Research Council grant PRG877. The second author is supported in part by the grants MTM2017-83262-C2-2-P and Fundaci\'on S\'eneca Regi\'on de Murcia 20906/PI/18. The third author was supported by NRF (NRF- 2019R1A2C1003857). The fourth author was supported by Juan de la Cierva-Formaci\'on fellowship FJC2019-039973, by MTM2017-86182-P (Government of Spain, AEI/FEDER, EU), by MICINN (Spain) Grant PGC2018-093794-B-I00 (MCIU, AEI, FEDER, UE), by Fundaci\'on S\'eneca, ACyT Regi\'on de Murcia grant 20797/PI/18, by Junta de Andaluc\'ia Grant A-FQM-484-UGR18 and by Junta de Andaluc\'ia Grant FQM-0185.

\end{document}